\documentclass[ 11pt,a4paper]{article}
\textheight=251mm
\textwidth=150mm
\headheight=12pt
\headsep=1mm
\footskip=8mm
\evensidemargin=-10mm
\oddsidemargin=5mm
\topmargin=-10.5mm

\usepackage[all]{xy}
\usepackage[utf8]{inputenc}
\usepackage{amsmath,bm}
\usepackage{amssymb}
\usepackage{amsthm}
\usepackage{mathtools}
\usepackage{cases}
\usepackage{float}
\usepackage{here}
\usepackage{comment}
\usepackage{ulem}
\usepackage[T1]{fontenc}
\usepackage{textcomp}
\usepackage{graphicx, color}
\usepackage{tikz,epic,eso-pic}


\theoremstyle{plain}
\newtheorem{thm}{Theorem}[section]
\newtheorem{lem}[thm]{Lemma}
\newtheorem{cor}[thm]{Corollary}
\newtheorem{prop}[thm]{Proposition}

\theoremstyle{definition}
\newtheorem{df}[thm]{Definition}

\newtheorem{eg}[thm]{Example}

\theoremstyle{remark}
\newtheorem{rem}[thm]{Remark}


\numberwithin{equation}{section}
\allowdisplaybreaks[3]

\newcommand{\R}{\mathbb{R}}

\newcommand{\Z}{\mathbb{Z}}

\newcommand{\sSet}{\Set^{\Delta^{\rm op}}}

\newcommand{\Set}{{\sf Set}}

\newcommand{\tS}{{\tt S}}
\newcommand{\tT}{{\tt t}}
\newcommand{\tH}{{\tt h}}
\newcommand{\tV}{{\tt v}}
\newcommand{\tHV}{{\tt hv}}

\newcommand{\wt}{\widetilde}

\newcommand{\del}{\partial}

\newcommand{\too}{\longrightarrow}

\DeclareMathOperator{\MC}{\sf MC}
\DeclareMathOperator{\MH}{\sf MH}

\usepackage{xcolor}

\title{
Magnitude homology and homotopy type of metric fibrations
}

\author{Yasuhiko Asao, Yu Tajima and  Masahiko Yoshinaga}


\date{\empty}
\begin{document}

\maketitle
\begin{abstract}
    In this article, we show that each two metric fibrations with a common base and a common fiber have isomorphic magnitude homology, and even more, the same magnitude homotopy type. That can be considered as a generalization of a fact proved by T. Leinster that  the magnitude of a metric fibration with finitely many points is a product of those of the base and the fiber. We also show that the definition of  the magnitude homotopy type due to the second and the third authors is equivalent to the geometric realization of Hepworth and Willerton's pointed simplicial set.
\end{abstract}
\section{Introduction}
The notion of a {\it metric fibration} was defined by T. Leinster in his study of magnitude (\cite{L2}). It is a ``fibration in the category of metric spaces'',  defined analogously to the Grothendieck fibrations of small categories, where one sees a metric space as an category enriched over $([0, \infty), \geq, +)$. Based on the fact that a Grothendieck fibration can also be considered as a lax functor, the first author later provided an analogous description for the metric fibration (\cite{A2}). A remarkable property of the metric fibration is that the magnitude of the total space of a metric fibration is a product of those of the base and the fiber if they are finite metric spaces (\cite{L2} Theorem 2.3.11).  In this article, we show that the same is true for the magnitude homology and the magnitude homotopy type of a metric fibration possibly with infinitely many points. Namely we have the following.
\begin{thm}[Corollary \ref{main1}]
    Let $\pi : E \too B$ a metric fibration, and let $F$ be its fiber. For $\ell>0$, we have a homotopy equivalence 
    \[
    \MC^{\ell}_\ast(E) \simeq \bigoplus_{\ell_\tV + \ell_\tH = \ell} \MC^{\ell_\tV}_\ast(F)\otimes \MC^{\ell_\tH}_\ast(B),
    \]
    where $\MC$ denotes the magnitude chain complex.
\end{thm}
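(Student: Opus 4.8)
\emph{Strategy.} The plan is to deduce the corollary from the sharper statement advertised in the title --- that the magnitude homotopy type of a metric fibration depends only on its base and fiber --- by comparing $\pi\colon E\to B$ with the \emph{trivial} metric fibration $\mathrm{pr}\colon B\times F\to B$, where $B\times F$ carries the $\ell^{1}$ metric $d\bigl((b,f),(b',f')\bigr)=d_{B}(b,b')+d_{F}(f,f')$; one checks at once that $\mathrm{pr}$ is a metric fibration with base $B$ and fiber $F$, the lift of $(b,f)$ over $b'$ being $(b',f)$. Granting the comparison $\MC^{\ell}_\ast(E)\simeq\MC^{\ell}_\ast(B\times F)$, the corollary follows from the magnitude K\"unneth formula for $\ell^{1}$-products,
\[
\MC^{\ell}_\ast(B\times F)\ \simeq\ \bigoplus_{\ell_\tV+\ell_\tH=\ell}\MC^{\ell_\tV}_\ast(F)\otimes\MC^{\ell_\tH}_\ast(B),
\]
which is essentially formal: an $n$-tuple in $B\times F$ is a pair consisting of an $n$-tuple in $B$ and an $n$-tuple in $F$ whose lengths add to $\ell$, so $\MC^{\ell}_\ast(B\times F)$ is the reduced normalized chain complex of a levelwise product of magnitude simplicial sets, and Eilenberg--Zilber identifies it up to chain homotopy with the tensor product --- no $\Tor$ obstruction arises at the chain level since magnitude chain complexes are degreewise free.

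\emph{The comparison.} I would prove $\MC^{\ell}_\ast(E)\simeq\MC^{\ell}_\ast(B\times F)$ at the level of magnitude homotopy types, using the paper's other main result that these are the realizations of the Hepworth--Willerton pointed simplicial sets $M^{\ell}(-)$ (whose reduced normalized chains recover $\MC^{\ell}_\ast$). Recall from \cite{L2} and \cite{A2} the structure of a metric fibration: for $e\in E$ over $b=\pi(e)$ and $b'\in B$ there is a unique $\gm_{b,b'}(e)\in F_{b'}:=\pi^{-1}(b')$ with $d_{E}(e,\gm_{b,b'}(e))=d_{B}(b,b')$; the maps $\gm_{b,b'}\colon F_{b}\to F_{b'}$ are mutually inverse isometries, every fiber is isometric to $F$, and $d_{E}(e,e')=d_{B}(\pi e,\pi e')+d_{F_{b'}}\!\bigl(\gm_{b,b'}(e),e'\bigr)$ for $e'\in F_{b'}$. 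Hence a length-$\ell$ tuple $(e_{0},\dots,e_{n})$ in $E$ decomposes into its image $(b_{0},\dots,b_{n})$ in $B$, of some length $\ell_\tH$, together with the tuple in $F_{b_{0}}$ obtained by transporting each $e_{i}$ back along the chain $b_{i},b_{i-1},\dots,b_{0}$, of length $\ell_\tV=\ell-\ell_\tH$; after fixing isometries $F\xrightarrow{\ \sim\ }F_{b}$ this yields a degreewise comparison between $M^{\ell}(E)$ and the tuple sets of $B\times F$, split according to $\ell_\tV+\ell_\tH=\ell$. I would then show it is a weak equivalence by filtering both sides by horizontal length and inducting on skeleta. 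The essential point is that, although this trivialization is incoherent and the metric fibration may be genuinely twisted (parallel transport around a loop in $B$ can act nontrivially on $\MC_\ast(F)$, so this is \emph{not} a trivial local system), the only monodromy seen by the magnitude complex runs along \emph{geodesic} chains in $B$, and parallel transport is holonomy-free along those: if $b''$ is metrically between $b$ and $b'$ then $\gm_{b'',b'}\circ\gm_{b,b''}=\gm_{b,b'}$, by uniqueness of lifts. So the twist, though nonzero as a local system, is annihilated in the passage to the magnitude homotopy type, and the induction can be pushed through.

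\emph{Main obstacle.} The crux is precisely this last step: organizing the skeletal induction so that neither the arbitrary trivializations $F\xrightarrow{\sim}F_{b}$ nor the holonomy of $\pi$ obstruct the weak equivalence. The cleanest route I can see is to use Asao's presentation of metric fibrations via lax functors (\cite{A2}), compare the ``magnitude nerves'' of $E$ and of $B\times F$ functorially, and reduce everything to the observation above that every boundary operator of a magnitude chain complex is supported on geodesic chains, along which the holonomy disappears. The remaining ingredients --- the metric-fibration structure lemmas, the $\ell^{1}$-K\"unneth formula, and the passage between $\MC_\ast$ and reduced chains of $M^{\ell}$ --- are standard or already in the literature.
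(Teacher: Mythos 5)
Your reduction to the trivial fibration plus the K\"unneth formula is a reasonable strategy in outline, but the crux --- the comparison $\MC^{\ell}_\ast(E)\simeq\MC^{\ell}_\ast(F\times B)$ --- rests on a claim that is false, and this is exactly where the real difficulty lives. You assert that ``the only monodromy seen by the magnitude complex runs along geodesic chains in $B$,'' so that the composition rule $\gamma_{b'',b'}\circ\gamma_{b,b''}=\gamma_{b,b'}$ for $b''$ between $b$ and $b'$ (which is indeed true) kills the twist. But the boundary of $\MC^{\ell}_\ast(E)$ is supported on triples $x_{i-1}\prec x_i\prec x_{i+1}$ that are geodesic in $E$, and such triples need \emph{not} project to geodesic triples in $B$: in Example \ref{egT}(6) of the paper (the nontrivial fibration over $K_3$ with fiber $I_2$) one has $a\prec e\prec f$ with $T(a,e,f)=\tH\tH$, while $\pi a,\pi e,\pi f$ are three distinct vertices of $K_3$, so $\pi a\prec\pi e\prec\pi f$ fails and $T(a,f)=\tT$ (cf.\ Lemma \ref{formula}(3),(4)). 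Along such projected chains the parallel transports do not compose, which is precisely the twisting of the fibration. Concretely, your transport-along-the-chain assignment $(e_0,\dots,e_n)\mapsto((b_i,f_i))_i$ is a bijection on generators $P^{\ell}_n(E)\to P^{\ell}_n(F\times B)$, but it is not a chain map: the face at $i$ may be allowed on the $E$ side and forbidden on the $F\times B$ side (or produce a different element, since deleting $b_i$ changes the transported components $f_j$ for $j>i$) exactly in the $\tH\tH\to\tT$ configurations above. So ``the twist is annihilated in the passage to the magnitude homotopy type'' is the theorem to be proved, not an observation, and your ``filter by horizontal length and induct on skeleta'' step is left without the mechanism that would make it go through.

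For comparison, the paper supplies precisely such a mechanism at the chain level: the submodule $D^{\ell}_\ast(E)$ spanned by chains containing a tilted pair before a horizontal--vertical triple or vice versa (Definition \ref{defd}) is shown to be a subcomplex (Lemma \ref{subchain}, via Lemma \ref{formula}) and to be contractible by an algebraic Morse matching that pairs a chain $a$ with $a^{\tHV}$, obtained by inserting the lifted point $x_{m+m'}^{\pi x_{m+m'+1}}$ into the first tilted gap (Proposition \ref{dcontractible}); on the quotient $\MC^{\ell}_\ast(E)/D^{\ell}_\ast(E)$, which is spanned by chains of type $\tV^m\tH^{n-m}$, your transport identification does work and gives the isomorphism with $\bigoplus_{\ell_\tV+\ell_\tH=\ell}\MC^{\ell_\tV}_\ast(F)\otimes\MC^{\ell_\tH}_\ast(B)$ (Proposition \ref{chainisom}). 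Your proposal contains no analogue of this step (no subcomplex, no filtration whose associated graded is controlled, no Morse-type cancellation of the tilted chains), so as it stands the central comparison is unproved. A secondary, presentational point: deducing the corollary from ``the sharper statement advertised in the title'' would be circular within this paper, since that statement is itself obtained from the chain-level result; your independent route avoids this only if the comparison above is actually established.
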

\begin{thm}[Corollary \ref{main2}, Corollary \ref{main3}]
    Let $\pi : E \too B$ be a metric fibration and let $F$ be its fiber. Then we have a homotopy equivalence 
    \[
    |{\sf M}^{\ell}_\bullet(E)| \simeq  \bigvee_{\ell_\tV + \ell_\tH = \ell}|{\sf M}^{\ell_\tV}_\bullet(F)| \wedge |{\sf M}^{\ell_\tH}_\bullet(B)|,
    \]
     where $|{\sf M}^{\ell}_\bullet(-)|$ is the geometric realization of the Hepworth and Willerton's pointed simplicial set (\cite{HW}).
\end{thm}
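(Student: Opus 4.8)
The plan is to prove the displayed equivalence at the level of the Hepworth--Willerton pointed simplicial set ${\sf M}^\ell_\bullet(E)$ itself, obtaining it from the metric fibration structure by a filtration argument that mirrors, at the level of spaces, the chain homotopy equivalence of Corollary \ref{main1}. The identification of $|{\sf M}^\ell_\bullet(-)|$ with the second and third authors' magnitude homotopy type — the remaining ingredient of Corollary \ref{main3} — I would treat separately, by matching the explicit cell structures of the two constructions; I do not expect that comparison to be the difficult point.

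First I would set up the metric geometry. For $e,e'\in E$ the closest-lift axiom gives the additive decomposition $d_E(e,e') = d_B(\pi e,\pi e') + d_{F}\big(T_{\pi e,\pi e'}(e),\,e'\big)$, the last distance measured in the fiber over $\pi e'$, where $T$ denotes the (unique, isometric) transport between fibers; and the key lemma is that transport is trivial along geodesics, i.e.\ $T_{b,b_1}T_{b_0,b}=T_{b_0,b_1}$ whenever $b$ lies on a geodesic from $b_0$ to $b_1$. Iterating transport along a base sequence then yields, for every $n$, a length-additive bijection between $(n+1)$-tuples in $E$ and pairs consisting of an $(n+1)$-tuple in a fiber $F$ and an $(n+1)$-tuple in $B$ (with $\ell_E=\ell_F+\ell_B$), compatible with degeneracies.

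Next, filter ${\sf M}^\ell_\bullet(E)$ by horizontal length: let $\mathrm{Fil}_k$ be the simplicial subset spanned by tuples whose image in $B$ has length $\le k$. This is a simplicial subset because no face or degeneracy operator increases horizontal length. Using the transport bijection together with the geodesic-triviality lemma — which guarantees that exactly the face maps surviving into a given subquotient preserve the transport data — one identifies $\mathrm{Fil}_{\ell_\tH}/\mathrm{Fil}_{<\ell_\tH}$ with ${\sf M}^{\ell-\ell_\tH}_\bullet(F)\wedge{\sf M}^{\ell_\tH}_\bullet(B)$ as pointed simplicial sets, the smash appearing because a tuple survives to the subquotient exactly when \emph{both} its fiber part and its base part are at full length. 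Since geometric realization is a left adjoint it commutes with the wedge, and it carries the smash of pointed simplicial sets to the smash of spaces, so the subquotients realize to $|{\sf M}^{\ell-\ell_\tH}_\bullet(F)|\wedge|{\sf M}^{\ell_\tH}_\bullet(B)|$; it then remains to show that the (finite) filtration $|\mathrm{Fil}_\bullet|$ of $|{\sf M}^\ell_\bullet(E)|$ splits as the wedge of its subquotients.

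The splitting is the main obstacle, and it is not formal: because the holonomy of a metric fibration can be nontrivial, ${\sf M}^\ell_\bullet(E)$ is \emph{not} literally the simplicial wedge of the layers — a face of a tuple of horizontal length $\ell_\tH$ can have strictly smaller horizontal length while keeping total length $\ell$ (such examples already occur for a two-point fiber over a three-point base), so the connecting maps of the filtration are not visibly null. I expect this to be dealt with in one of two ways: either by upgrading the chain homotopy equivalence behind Corollary \ref{main1} to an honest simplicial homotopy equivalence ${\sf M}^\ell_\bullet(E)\simeq\bigvee_{\ell_\tV+\ell_\tH=\ell}\big({\sf M}^{\ell_\tV}_\bullet(F)\wedge{\sf M}^{\ell_\tH}_\bullet(B)\big)$, with the ``layer-dropping'' faces absorbed into an explicit simplicial homotopy built from the transport maps; or by constructing, from the lift maps of the fibration, compatible retractions $|\mathrm{Fil}_{\ell_\tH}|\to|\mathrm{Fil}_{<\ell_\tH}|$ and splitting the filtration one step at a time. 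Granting the splitting, combining it with the identification of the layers and the compatibility of $|-|$ with $\vee$ and $\wedge$ gives the displayed equivalence, and feeding in $|{\sf M}^\ell_\bullet(-)|\simeq\MH^\ell(-)$ completes Corollaries \ref{main2} and \ref{main3}.
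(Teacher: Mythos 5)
Your setup (the additive decomposition $d_E=d_B+d_F$, the fact that transport composes along base geodesics, and the resulting length-additive encoding of an $E$-tuple by a fiber tuple and a base tuple) is correct, and your identification of the layers of the horizontal-length filtration with the smashes is plausible for exactly the reasons you give. But the argument has a genuine gap at the point you yourself flag: the splitting of the filtration of $|{\sf M}^\ell_\bullet(E)|$ into the wedge of its subquotients is the entire content of the theorem, and you do not prove it — you only name two strategies (``upgrade the chain equivalence to a simplicial homotopy equivalence'' or ``construct compatible retractions $|\mathrm{Fil}_{\ell_\tH}|\to|\mathrm{Fil}_{<\ell_\tH}|$'') without carrying either out. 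Neither is routine: the chain-level statement of Corollary \ref{main1} is obtained by algebraic Morse theory and by itself only controls homology, so passing to an actual splitting of spaces needs an additional geometric input (simple connectivity, co-H structure, or explicit null-homotopies of the attaching data), and the existence of the retractions in your second option is essentially equivalent to what is to be proved. As written, the proposal establishes the filtration and its layers but not the displayed equivalence.

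The paper avoids the splitting problem altogether, and this is the idea your proposal is missing. Instead of decomposing ${\sf M}^\ell_\bullet(E)$ directly, one collapses a contractible subobject: the $\Delta$-set $D^\ell_\bullet(E)\subset{\sf m}^\ell_\bullet(E)$ spanned by tuples containing a tilted pair before a horizontal--vertical triple or vice versa. Its realization is contractible (acyclicity comes from the same Morse matching as in the chain-level argument, plus a separate check of simple connectivity, Lemma \ref{dcont2}), so $|{\sf M}^\ell_\bullet(E)|\simeq|j_!{\sf m}^\ell_\bullet(E)/j_!D^\ell_\bullet(E)|$; and the quotient ${\sf m}^\ell_\bullet(E)/D^\ell_\bullet(E)$ is isomorphic, via exactly the transport maps you describe, to the corresponding quotient for the trivial fibration $F\times B$ (Proposition \ref{deltaisom}). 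Hence $|{\sf M}^\ell_\bullet(E)|\simeq|{\sf M}^\ell_\bullet(F\times B)|$ (Corollary \ref{main2}), i.e.\ the nontrivial holonomy is killed by the quotient rather than by splitting a filtration; the wedge-of-smashes decomposition is then only needed for the product $F\times B$, where it holds by the K\"unneth theorem of Tajima--Yoshinaga (\cite{TY} Theorem 4.27) combined with Proposition \ref{TYprof}. If you want to salvage your approach, the honest comparison is that your ``layer-dropping'' faces are precisely the generators of $D^\ell_\bullet(E)$, and the paper's move is to show that this defect is concentrated in a contractible subcomplex — a statement you would in any case have to prove to make either of your proposed splitting strategies work. (Also, your closing phrase ``feeding in $|{\sf M}^\ell_\bullet(-)|\simeq\MH^\ell(-)$'' conflates the homotopy type with its homology; what is actually needed there is the identification of $|{\sf M}^\ell_\bullet(-)|$ with the Tajima--Yoshinaga model, which is Proposition \ref{TYprof}.)
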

In particular, we give an another proof for  the K\"{u}nneth theorem for magnitude homology proved by Hepworth and Willerton (\cite{HW} Proposition 8.4). 

We use the terminology {\it magnitude homotopy type} as a CW complex whose singular homology is isomorphic to the magnitude homology of some metric sapce. Such a topological space first appeared in Hepworth and Willerton's paper (\cite{HW} Definition 8.1), and later the second and the third author gave another definition (\cite{TY}) by generalizing the construction for graphs due to the first author and Izumihara (\cite{AI}). In their paper, the second and the third author stated that the both definitions of the magnitude homotopy type, theirs and Hepworth-Willeton's, are equivalent without a proof. We gave a proof for it in the appendix (Proposition \ref{TYprof}).

The main idea of the proof of our main results is to construct a contractible subcomplex $D^\ell_\ast(E)$ of the magnitude chain complex $\MC^\ell_\ast(E)$ for a metric fibration $\pi : E \too B$. We have the following isomorphism (Proposition \ref{chainisom})

\[
    \MC^{\ell}_{\ast}(E)/D_\ast^{\ell}(E)\cong \bigoplus_{\ell_{\tV} + \ell_{\tH} = \ell} \MC^{\ell_{\tV}}_\ast(F)\otimes \MC^{\ell_{\tH}}_\ast(B),
    \]
where $F$ is the fiber of $\pi$. To find such a subcomplex $D^\ell_\ast(E)$, we use the  classification {\it horizontal, vertical, tilted}, of pairs of points of $E$ as in Figure \ref{figcla}. We define (Definition \ref{defd}) a submodule $D^\ell_n(E)$ of $\MC^\ell_n(E) \subset \Z E^{n+1}$ as one generated by tuples $(x_0, \dots, x_n)$ that contains tilted pair $(x_s, x_{s+1})$ earlier than horizontal-vertical triple $(x_t, x_{t+1}, x_{t+2})$ (namely $s+1 \leq t$), or contains  horizontal-vertical triple $(x_t, x_{t+1}, x_{t+2})$ earlier than tilted pair $(x_s, x_{s+1}) $ (namely $t+2 \leq s$). We show that $D^\ell_\ast(E)$ is a subcomplex of $\MC^\ell_\ast(E)$ (Lemma \ref{subchain}), and that it is contractible (Proposition \ref{dcontractible}) by using the algebraic Morse theory. For the magnitude homotopy type, we basically follow the same argument using {\it $\Delta$-sets} instead of chain complexes (Section \ref{htpytype}).

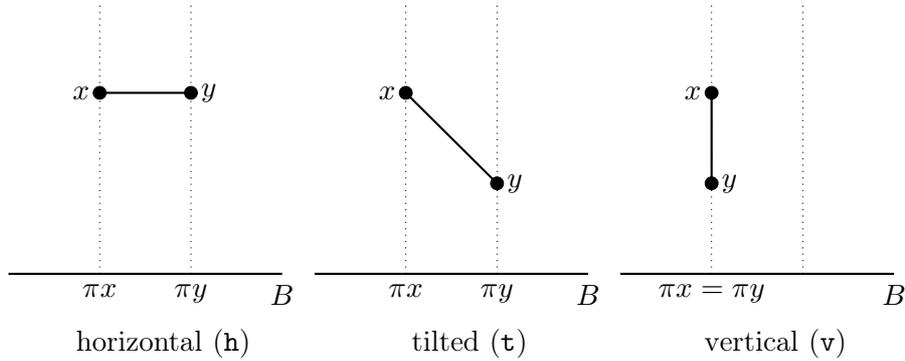
\begin{figure}[H]
\centering
\begin{tikzpicture}[scale=1.2]

\filldraw[fill=black, draw=black] (2, 2) circle (2pt) node[right] {$y$} ;
\filldraw[fill=black, draw=black] (1, 2) circle (2pt) node[left] {$x$} ;
\draw (1, 0)  node[below] {$\pi x$} ;
\draw (2, 0)  node[below] {$\pi y$} ;
\draw (1.7, -0.5)  node[below] {{\text horizontal ($\tH$)}} ;

\draw[thick] (0, 0)--(3, 0) node[below] {$B$};
\draw[thick] (1, 2)--(2, 2);
\draw[dotted] (1, 0)--(1, 3);
\draw[dotted] (2, 0)--(2, 3);

\end{tikzpicture}
\begin{tikzpicture}[scale=1.2]

\filldraw[fill=black, draw=black] (2, 1) circle (2pt) node[right] {$y$} ;
\filldraw[fill=black, draw=black] (1, 2) circle (2pt) node[left] {$x$} ;
\draw (1, 0)  node[below] {$\pi x$} ;
\draw (2, 0)  node[below] {$\pi y$} ;
\draw (1.7, -0.5)  node[below] {{\text tilted ($\tT$)}} ;

\draw[thick] (0, 0)--(3, 0) node[below] {$B$};
\draw[thick] (1, 2)--(2, 1);
\draw[dotted] (1, 0)--(1, 3);
\draw[dotted] (2, 0)--(2, 3);

\end{tikzpicture}
\centering
\begin{tikzpicture}[scale=1.2]

\filldraw[fill=black, draw=black] (1, 1) circle (2pt) node[right] {$y$} ;
\filldraw[fill=black, draw=black] (1, 2) circle (2pt) node[left] {$x$} ;
\draw (1, 0)  node[below] {$\pi x= \pi y$} ;
\draw (1.7, -0.5)  node[below] {{\text vertical ($\tV$)}} ;

\draw[thick] (0, 0)--(3, 0) node[below] {$B$};
\draw[thick] (1, 2)--(1, 1);
\draw[dotted] (1, 0)--(1, 3);
\draw[dotted] (2, 0)--(2, 3);

\end{tikzpicture}
\label{figcla}
\caption{The dotted lines are the fibers of $\pi x$ and $\pi y$. A pair $(x, y)$ is {\it horizontal} if it is ``pararell'' to the base, {\it vertical} if they are in the same fiber, and {\it tilted} otherwise. For a precise definition, see Definition \ref{defcla}. We abbreviate them to symbols $\tH, \tT, \tV$ in the following.}
\end{figure}

In the remained part of this article, we show the isomorphism of magnitude homology in Section \ref{hlgy}, and show the equvalence of magnitude homotopy type in Section \ref{htpytype}. The Section \ref{appendix} is an appendix section in which we show the equivalence of definions of the magnitude homotopy type.

\subsubsection*{Acknowledgements}
Y. A. was supported by JSPS KAKENHI 24K16927. Y. T. was supported by JST SPRING JPMJSP2119.
 M. Y. was partially supported by JSPS KAKENHI JP22K18668 and JP23H00081.
\section{Isomorphism at homology level}\label{hlgy}
\subsection{magnitude homology}
\begin{df}
    Let $(X, d)$ be a metric space.
    \begin{enumerate}
        \item For $\ell \in \R_{\geq 0}$ and $n \in \Z_{\geq 0}$, we define 
        \[
        P_n^\ell(X) := \{(x_0, \dots, x_n) \in X^{n+1} \mid x_i\neq x_{i+1}, \sum_{i=0}^{n-1}d(x_i, x_{i+1}) = \ell\},
        \]
        and $P_n(X) := \cup_{\ell} P_n^\ell(X)$.
        \item For $x, y, z \in X$, we write $x \prec y \prec z$ if $d(x, z) = d(x, y) + d(y, z)$.
        \item The {\it magnitude chain complex} $(\MC^{\ell}_\ast(X), \del^{\ell}_\ast)$ is defined by  $\MC^{\ell}_n(X) = \Z P^{\ell}_n(X)$ and
    \[
    \del_n (x_0, \dots, x_n) := \sum_{x_{i-1}\prec x_i \prec x_{i+1}}(-1)^i(x_0, \dots, \hat{x}_i, \dots, x_n).
    \]
Its homology $\MH^\ell_\ast(X)$ is called the {\it magnitude homology of } $X$.
        
    \end{enumerate}
\end{df}

\subsection{metric fibration}

\begin{df}
    A Lipschitz map $\pi : E \too B$ is a {\it metric fibration} if it satisfies the following : for all $x \in E$ and $b \in B$, there uniquely exists $x^b \in \pi^{-1}b$ satisfying 
    \begin{enumerate}
        \item $d(x, x^b) = d(\pi x, b)$,
        \item $d(x, y) = d(x, x^b) + d(x^b, y)$ for all $y \in \pi^{-1}b$.
    \end{enumerate}
\end{df}

\begin{lem}
    Let $\pi : E \too B$ be a metric fibration. For $b, b' \in B$, a map $ \pi^{-1}b \too \pi^{-1}b' ; x \mapsto x^{b'}$ is an isomorphism of metric spaces.
\end{lem}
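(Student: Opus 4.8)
The plan is to produce an explicit two-sided inverse and then verify that distances are preserved, everything coming straight from the defining axioms. Write $\phi\colon \pi^{-1}b \to \pi^{-1}b'$ for the map $x \mapsto x^{b'}$ in the statement, and $\psi\colon \pi^{-1}b' \to \pi^{-1}b$ for the analogous map $y \mapsto y^{b}$ obtained by applying the metric fibration axiom to the base point $b$. Set $a := d(b,b')$; condition (1) of the definition gives $d(x,x^{b'}) = d(\pi x, b') = a$ for every $x \in \pi^{-1}b$, and symmetrically $d(y,y^{b}) = a$ for every $y \in \pi^{-1}b'$.

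First I would show $\psi\circ\phi = \mathrm{id}_{\pi^{-1}b}$. Fix $x \in \pi^{-1}b$ and apply condition (2) to the point $x^{b'} \in E$, the base point $b$, and the element $x \in \pi^{-1}b$: this reads $d(x^{b'},x) = d(x^{b'},(x^{b'})^{b}) + d((x^{b'})^{b},x)$. The left-hand side is $a$, and the first term on the right is $d(\pi x^{b'}, b) = d(b',b) = a$ by condition (1), so $d((x^{b'})^{b},x) = 0$, i.e. $(x^{b'})^{b} = x$. Interchanging the roles of $b$ and $b'$ gives $\phi\circ\psi = \mathrm{id}$, so $\phi$ is a bijection.

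Next I would check $d(x^{b'},y^{b'}) = d(x,y)$ for $x,y \in \pi^{-1}b$. Applying condition (2) to the point $x$, the base point $b'$, and the element $y^{b'} \in \pi^{-1}b'$ yields $d(x,y^{b'}) = d(x,x^{b'}) + d(x^{b'},y^{b'}) = a + d(x^{b'},y^{b'})$; on the other hand the triangle inequality together with $d(y,y^{b'}) = a$ gives $d(x,y^{b'}) \le d(x,y) + a$. Comparing, $d(x^{b'},y^{b'}) \le d(x,y)$. Feeding $\phi(x),\phi(y)$ into the same inequality for $\psi$ and using $\psi\circ\phi = \mathrm{id}$ gives the reverse inequality $d(x,y) \le d(x^{b'},y^{b'})$, hence equality; so $\phi$ is an isometric bijection, which is what ``isomorphism of metric spaces'' means here.

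I do not expect a genuine obstacle. The only point requiring care is bookkeeping: being precise about which point of $E$, which base point, and which fiber each instance of condition (2) is applied to. Beyond that, the argument uses nothing but the two defining conditions, symmetry of $d$, and the triangle inequality.
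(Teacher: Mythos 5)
Your proof is correct: both the two-sided inverse computation and the $1$-Lipschitz-in-both-directions argument follow cleanly from conditions (1) and (2) of the definition, and together they give an isometric bijection. The paper itself offers no argument here, only citations to Leinster (Lemma 2.3.10) and Asao (Lemma 3.4), and your proof is essentially the standard one found there, so nothing further is needed.
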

\begin{proof}
    \cite{L2} Lemma 2.3.10, \cite{A2} Lemma 3.4.
\end{proof}

\begin{df}\label{defcla}
\begin{enumerate}
    \item Let $\tS$ be a monoid freely generated by words {\tt h, v, t}. We denote the subset of $\tS$ that consists of $n$ words by $\tS_n$.
    \item For a metric fibration $\pi : E \too B$, we define a map $T : P_1(E) \too \tS_1$ by 
    \[
    T(x, x') = \begin{cases} \tH & \text{ if }d(x, x') = d(\pi x, \pi x'), \\
    \tV  & \text{ if }d(\pi x, \pi x') = 0, \\
    \tT  & \text{ if }0 < d(\pi x, \pi x')  < d(x, x').\end{cases}
    \]
    We extend this map to a map $T : P_n(E) \too \tS$ by $T(x_0, \dots, x_n) = T(x_0, x_1)\dots T(x_{n-1}, x_n)$.
    \item For ${\tt xy} \in \tS_2$ and ${\tt z} \in \tS_1$, we write $\del {\tt xy} = {\tt z}$ if there is a metric fibration $\pi : E \too B$ and $(x, y, z) \in P_2(E)$ satisfying that $x\prec y \prec z, T(x, y, z) = {\tt xy}$ and $T(x, z) = {\tt z}$. We also define $\{\del {\tt xy}\} = \{{\tt z} \in \tS_1 \mid \del {\tt xy} = {\tt z}\}$.
    
    \end{enumerate}
\end{df}

\begin{rem}
    The words $\tH, \tV, \tT$ are abbreviations of {\it horizontal, vertical} and {\it tilted} respectively.
\end{rem}

\begin{eg}\label{egT}
In the following figures, the graph on the left is $(I_2 \times I_2) \times I_3$, where $I_n$ is the graph with vertices $\{1, \dots, n\}$ and edges $\{\{i, i+1\}\mid 1 \leq i \leq n-1\}$, and the graph on the right is a non-trivial metric fibration over the complete graph $K_3$ with the fiber $I_2$. We have the following :
\begin{enumerate}
\item $\begin{cases}1 \prec 2 \prec 6, T(1, 2, 6) = \tH\tV, T(1, 6) = \tT, \\ 
1 \prec 5 \prec 6, T(1, 5, 6) = \tV\tH, T(1, 6) = \tT,\end{cases}$
        \item $\begin{cases}1 \prec 2 \prec 7, T(1, 2, 7) = \tH\tT, T(1, 7) = \tT, \\
 1 \prec 6 \prec 7, T(1, 6, 7) = \tT\tH, T(1, 7) = \tT,\end{cases}$
        \item $\begin{cases}1 \prec 5 \prec 10, T(1, 5, 10) = \tV\tT, T(1, 10) = \tT, \\
1 \prec 6 \prec 10, T(1, 6, 10) = \tT\tV, T(1, 10) = \tT,\end{cases}$
        \item  $1 \prec 6 \prec 11, T(1, 6, 11) = \tT\tT, T(1, 11) = \tT$,
        \item $1 \prec 2 \prec 3, T(1, 2, 3) = \tH\tH, T(1, 3) = \tH$,
        \item $a \prec e \prec f, T(a, e, f) = \tH\tH, T(a, f) = \tT$.
\end{enumerate}
    \begin{figure}[htbp]
\centering
\begin{tikzpicture}[scale=1.2]
\filldraw[fill=black, draw=black] (0, 0) circle (2pt) node[below] {$5$} ;
\filldraw[fill=black, draw=black] (0, 2) circle (2pt) node[left] {$8$} ;
\filldraw[fill=black, draw=black] (2, 0) circle (2pt) node[below] {$6$} ;
\filldraw[fill=black, draw=black] (2, 2) circle (2pt) node[right] {$7$} ;
\filldraw[fill=black, draw=black] (0.5, 0.5) circle (2pt) node[below] {$1$} ;
\filldraw[fill=black, draw=black] (1.5, 0.5) circle (2pt) node[below] {$2$} ;
\filldraw[fill=black, draw=black] (1.5, 1.5) circle (2pt) node[right] {$3$} ;
\filldraw[fill=black, draw=black] (0.5, 1.5) circle (2pt) node[left] {$4$} ;
\filldraw[fill=black, draw=black] (-0.5, -0.5) circle (2pt) node[below] {$9$} ;
\filldraw[fill=black, draw=black] (2.5, -0.5) circle (2pt) node[below] {$10$} ;
\filldraw[fill=black, draw=black] (2.5, 2.5) circle (2pt) node[right] {$11$} ;
\filldraw[fill=black, draw=black] (-0.5, 2.5) circle (2pt) node[left] {$12$} ;

\draw[thick] (0, 0)--(2, 0);
\draw[thick] (2, 0)--(2, 2);
\draw[thick] (2, 2)--(0, 2);
\draw[thick] (0, 0)--(0, 2);
\draw[thick] (0.5, 0.5)--(1.5, 0.5);
\draw[thick] (1.5, 0.5)--(1.5, 1.5);
\draw[thick] (1.5, 1.5)--(0.5, 1.5);
\draw[thick] (0.5, 0.5)--(0.5, 1.5);

\draw[thick] (-0.5, -0.5)--(2.5, -0.5);
\draw[thick] (2.5, -0.5)--(2.5, 2.5);
\draw[thick] (2.5, 2.5)--(-0.5, 2.5);
\draw[thick] (-0.5, -0.5)--(-0.5, 2.5);

\draw[thick] (0, 0)--(0.5, 0.5);
\draw[thick] (1.5, 0.5)--(2, 0);
\draw[thick] (2, 2)--(1.5, 1.5);
\draw[thick] (0.5, 1.5)--(0, 2);

\draw[thick] (0, 0)--(-0.5, -0.5);
\draw[thick] (2.5, -0.5)--(2, 0);
\draw[thick] (2, 2)--(2.5, 2.5);
\draw[thick] (-0.5, 2.5)--(0, 2);

\end{tikzpicture}
\hspace{1cm}
\centering
\begin{tikzpicture}[scale=1.2]
\filldraw[fill=black, draw=black] (1, 1.732) circle (2pt) node[right] {$c$} ;
\filldraw[fill=black, draw=black] (2, 0) circle (2pt) node[above] {$b$} ;
\filldraw[fill=black, draw=black] (0, 0) circle (2pt) node[above] {$a$} ;
\filldraw[fill=black, draw=black] (1, 2.732) circle (2pt) node[right] {$f$} ;
\filldraw[fill=black, draw=black] (-0.866, -0.5) circle (2pt) node[above] {$d$} ;
\filldraw[fill=black, draw=black] (2.866, -0.5) circle (2pt) node[above] {$e$} ;

\draw[thick] (1, 1.732)--(2, 0);
\draw[thick] (1, 1.732)--(0, 0);
\draw[thick] (2, 0)--(-0.866, -0.5);
\draw[thick] (0, 0)--(-0.866, -0.5);
\draw[thick] (1, 1.732)--(1, 2.732);
\draw[thick] (2, 0)--(2.866, -0.5);
\draw[thick] (2.866, -0.5)--(0, 0);
\draw[thick] (1, 2.732)--(-0.866, -0.5);
\draw[thick] (1, 2.732)--(2.866, -0.5);

\end{tikzpicture}
\label{graphbdl}
\end{figure}
\end{eg}

\begin{lem}\label{formula}
For each ${\tt x}, {\tt y} \in \tS_1$, we have the following.
\begin{enumerate}
    \item $ \{\del {\tt xy}\} = \{\tV\} \Leftrightarrow \del {\tt xy} = \tV \Leftrightarrow {\tt xy} = \tV\tV$.
    \item $\{\del \tHV \}= \{\del \tV\tH \}=  \{\tT\}$, and $\{\del {\tt x}\tT\} = \{\del \tT{\tt x}\} = \{\tT\}$ for all ${\tt x} \in \tS_1$.
\item $\{\del \tH\tH\} = \{\tH, \tT\}$. 
\item For $(x, y, z) \in P_2(E) $ with $x\prec y \prec z$ and $T(x, y, z) = \tH\tH$, we have $T(x, z) = \tH$ if and only if $\pi x \prec \pi y \prec \pi z$.
\end{enumerate}
\end{lem}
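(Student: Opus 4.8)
The plan is to extract a single rigidity property of metric fibrations and then reduce each of the four assertions to a short case check against the trichotomy $\{\tH,\tV,\tT\}$, using the explicit witnesses recorded in Example~\ref{egT}. The rigidity property I would prove first is: \emph{if $x\prec y\prec z$ in $E$ and $\pi x=\pi z$, then $\pi x=\pi y=\pi z$}. To see this, set $b:=\pi x=\pi z$ and let $y^b\in\pi^{-1}b$ be the lift of $y$ provided by the fibration. Applying axiom (2) to $y$ with the two points $x,z\in\pi^{-1}b$ gives
\[
d(x,y)=d(x,y^b)+d(y^b,y),\qquad d(z,y)=d(z,y^b)+d(y^b,y).
\]
Adding these, substituting $d(x,y)+d(z,y)=d(x,z)$, and using the triangle inequality $d(x,y^b)+d(z,y^b)\ge d(x,z)$ forces $d(y^b,y)=0$, hence $y=y^b\in\pi^{-1}b$ and $\pi y=b$. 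This is the only place where the fibration axioms are used beyond the $1$-Lipschitzness of $\pi$, and I expect it to be the main (if short) obstacle; everything below is bookkeeping over the trichotomy, together with the triangle inequality in $B$ and the fact that for any pair in $P_1(E)$ the $\tH$-case gives $d(x,x')=d(\pi x,\pi x')>0$.

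For (1): if $\del{\tt xy}=\tV$ then some configuration $x\prec y\prec z$ has $T(x,z)=\tV$, i.e.\ $\pi x=\pi z$, so the rigidity property gives $\pi x=\pi y=\pi z$ and hence ${\tt xy}=\tV\tV$; conversely every $\tV\tV$-configuration has $\pi x=\pi y=\pi z$, so $T(x,z)=\tV$. Since $\{\del\tV\tV\}$ is nonempty — e.g.\ $0\prec1\prec2$ in $\{0,1,2\}\subset\R$ mapped to the one-point metric space, which is trivially a metric fibration — this yields $\{\del\tV\tV\}=\{\tV\}$ and closes the chain of equivalences. For (2), fix a configuration $x\prec y\prec z$ in $P_2(E)$, so $d(x,z)=d(x,y)+d(y,z)$ with $d(x,y),d(y,z)>0$. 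If $T(x,y,z)\in\{\tHV,\tV\tH\}$ then the $\tV$-factor makes the base endpoints of that pair coincide, so $d(\pi x,\pi z)$ equals $d(x,y)$ (resp.\ $d(y,z)$), which is positive and, since the other edge has positive length, strictly less than $d(x,z)$; thus $T(x,z)=\tT$. If one of $T(x,y),T(y,z)$ is $\tT$, the rigidity property excludes $\pi x=\pi z$ (otherwise $\pi x=\pi y=\pi z$ and that pair would be $\tV$, not $\tT$), so $d(\pi x,\pi z)>0$, and moreover
\[
d(\pi x,\pi z)\le d(\pi x,\pi y)+d(\pi y,\pi z)<d(x,y)+d(y,z)=d(x,z)
\]
using $1$-Lipschitzness on the non-tilted factor and the defining strict inequality of $\tT$ on the other; hence again $T(x,z)=\tT$. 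Nonemptiness of each set $\{\del{\tt xy}\}$ occurring here is supplied by items (1)--(4) of Example~\ref{egT}.

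For (3): in a $\tH\tH$-configuration, $d(x,z)=d(x,y)+d(y,z)=d(\pi x,\pi y)+d(\pi y,\pi z)\ge d(\pi x,\pi z)$, while rigidity forces $d(\pi x,\pi z)>0$ (else $d(\pi x,\pi y)=0$, contradicting $T(x,y)=\tH$); so $T(x,z)=\tH$ when equality holds and $T(x,z)=\tT$ when it is strict, giving $\{\del\tH\tH\}\subseteq\{\tH,\tT\}$, and both values are realised by items (5) and (6) of Example~\ref{egT}. For (4), keeping the $\tH\tH$-configuration, the same equalities give $d(x,z)=d(\pi x,\pi y)+d(\pi y,\pi z)$, so $d(x,z)=d(\pi x,\pi z)$ if and only if $d(\pi x,\pi z)=d(\pi x,\pi y)+d(\pi y,\pi z)$, i.e.\ $\pi x\prec\pi y\prec\pi z$; and since $d(x,z)>0$ the equality $d(x,z)=d(\pi x,\pi z)$ is precisely the condition $T(x,z)=\tH$. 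This last point is immediate once the equalities are on the page, so I do not anticipate any difficulty there.
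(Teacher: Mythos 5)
Your proposal is correct and follows essentially the same route as the paper: your ``rigidity'' statement (if $x\prec y\prec z$ and $\pi x=\pi z$ then $\pi y=\pi x$) is exactly the computation the paper uses to prove part (1), and the remaining parts are the same Lipschitz/trichotomy case checks combined with the witnesses from Example~\ref{egT}. The only differences are cosmetic: you isolate the fibration computation as a standalone lemma and are slightly more explicit about nonemptiness of the sets $\{\del{\tt xy}\}$ and about why $\tH$ and $\tV$ are excluded in the $\tHV$, $\tV\tH$ cases, points the paper treats more tersely.
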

\begin{proof}
    \begin{enumerate}
            \item Obviously we have $ \{\del {\tt xy}\} = \{\tV\} \Rightarrow \del {\tt xy} = \tV$. Also we have ${\tt xy} = \tV\tV \Rightarrow  \{\del {\tt xy}\} = \{\tV\}$. Hence it is enough to show that  $\del {\tt xy} = \tV$ implies ${\tt xy} = \tV\tV$. Let $(x, y, z) \in P_2(E)$ with $x \prec y \prec z$. We show that $T(x, z) = \tV$ implies $T(x, y) = T(y, z) = \tV$. If $T(x, z) =  \tV$, we have $\pi x = \pi z$, which implies 
        \begin{align*}
        d(x, y) + d(y, z) &= d(\pi x, \pi y) + d(x, y^{\pi x}) + d(\pi y, \pi z) + d(y^{\pi z}, z)\\
        &= d(x, y^{\pi x}) + d(y^{\pi x}, z) +  2d(\pi x, \pi y) \\
        &\geq d(x, z) + 2d(\pi x, \pi y).
        \end{align*}
       Since we have $x \prec y \prec z$, we obtain that $d(\pi x, \pi y) = d(\pi z, \pi y) = 0$, namely $T(x, y) = T(y, z) = \tV$.
        \item Note that we have $\del \tHV = \tT$ and $\del \tV\tH = \tT$ by Example \ref{egT} (1), and we also have $\neg(\del \tV\tH = \tH)$ and $\neg(\del \tV\tH = \tV)$ by the definition of the metric fibration. Hence we obtain $\{\del \tHV \}= \{\del \tV\tH\} = \{\tT\}$. Suppose that $T(x, y, z) = {\tt x}\tT$ for $(x, y, z) \in P_2(E), {\tt x} \in \tS_1$ and $x \prec y \prec z$. Then we have $d(x, z) = d(x, y) + d(y, z) > d(\pi x, \pi y) + d(\pi y, \pi z) \geq d(\pi x, \pi z) > 0$ by $T(y, z) = \tT$ and (1). Hence we obtain $T(x, z) = \tT$, and by Example \ref{egT} (2), (3) and (4), we obtain $\{\del {\tt x}\tT\} = \{\tT\}$. We can similarly show that  $\{\del \tT{\tt x}\} = \{\tT\}$.
        \item We have $\{\del \tH\tH\} \subset \{\tH, \tT\}$ by (1), and the inverse inclusion follows from Example \ref{egT} (5) and (6).
        \item By $T(x, y, z) = \tH\tH$ and $x\prec y \prec z$, we have 
        \[
        d(x, z) = d(x, y) + d(y, z) = d(\pi x, \pi y) + d(\pi y, \pi z).
        \]
        Hence $T(x, z) = \tH$ implies that $d(\pi x, \pi z) = d(x, z) = d(\pi x, \pi y) + d(\pi y, \pi z)$, and $\pi x \prec \pi y \prec \pi z$ implies that $d(x, z) = d(\pi x, \pi z)$.
    \end{enumerate}
\end{proof}

\subsection{a subcomplex $D_\ast^{\ell}(E)\subset \MC^{\ell}_\ast(E)$}
In the following, we construct a chain subcomplex $D^\ell_\ast(E) \subset \MC^{\ell}_\ast(E)$ that consists of  tuples of special types $P_n^{\ell, \tT}(E)$ and $P_n^{\ell, \tHV}(E)$.  We define the set $P_n^{\ell, \tT}(E) \subset P^\ell_n(E)$ as tuples containing {\it tilted pair} $(x_s, x_{s+1})$ earlier than {\it horizontal-vertical triple} $(x_t, x_{t+1}, x_{t+2})$ (namely $s+1 \leq t$). Dually, we define the set $P_n^{\ell, \tHV}(E) \subset P^\ell_n(E)$  as tuples containing  {\it horizontal-vertical triple} $(x_t, x_{t+1}, x_{t+2})$ earlier than {\it tilted pair} $(x_s, x_{s+1}) $ (namely $t+2 \leq s$). Formally we define them as follows.
\begin{df}\label{defd}
    For a metric fibration $\pi : E \too B$, we define subsets $P_n^{\ell, \tT}(E), P_n^{\ell, \tHV}(E) \subset P^{\ell}_n(E)$ by 
    \begin{align*}
        P_n^{\ell, \tT}(E) &:= \{x \in P^\ell_n(E) \mid Tx \in \tV^m\tH^{m'}\tT\tS \text{ for } m, m'\geq 0\}, \\
        P_n^{\ell, \tHV}(E) &:= \{x \in P^\ell_n(E) \mid Tx \in \tV^m\tH^{m'+1}\tV\tS \text{ for } m, m'\geq 0\}.
    \end{align*}
    We also define a submodule  $D^\ell_n(E) := \Z P_n^{\ell, \tT, \tHV}(E) \subset \MC^{\ell}_n(E)$, where $ P_n^{\ell, \tT, \tHV}(E) = P_n^{\ell, \tT}(E)\cup P_n^{\ell, \tHV}(E)$.
\end{df}


\begin{lem}\label{subchain}
    We have $\del_n x \in D^\ell_{n-1}(E)$ for $x \in P_n^{\ell, \tT, \tHV}(E)$. Namely, $D^\ell_\ast(E) \subset \MC^{\ell}_\ast(E)$ is a chain subcomplex.
\end{lem}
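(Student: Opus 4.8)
The plan is to rephrase membership in $D^\ell_\ast(E)$ as a condition on the $\tS$-word $T(-)$ and then check it is preserved by the face maps occurring in $\del$. Write $\del_n x=\sum_{x_{i-1}\prec x_i\prec x_{i+1}}(-1)^i\,\del^i x$ with $\del^i x=(x_0,\dots,\hat x_i,\dots,x_n)$; each $\del^i x$ already lies in $P^\ell_{n-1}(E)$, since $x_{i-1}\prec x_i\prec x_{i+1}$ together with $x_{i-1}\ne x_i$ forces $x_{i-1}\ne x_{i+1}$, and the total length is preserved because $d(x_{i-1},x_{i+1})=d(x_{i-1},x_i)+d(x_i,x_{i+1})$. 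So it suffices to show that $x\in P_n^{\ell,\tT,\tHV}(E)$ and $x_{i-1}\prec x_i\prec x_{i+1}$ imply $\del^i x\in P_{n-1}^{\ell,\tT,\tHV}(E)$. Set $s_j:=T(x_{j-1},x_j)$ and $t:=T(x_{i-1},x_{i+1})$; by multiplicativity of $T$ we have $T(\del^i x)=s_1\cdots s_{i-1}\,t\,s_{i+2}\cdots s_n$, and $x_{i-1}\prec x_i\prec x_{i+1}$ gives $t\in\{\del(s_is_{i+1})\}$ by Definition \ref{defcla}(3). Lemma \ref{formula}(1)--(3) determine $\{\del{\tt xy}\}$ for all nine two-letter words, and inspection shows $\tV$ (resp.\ $\tH$) lies in such a set only for ${\tt xy}=\tV\tV$ (resp.\ ${\tt xy}=\tH\tH$); hence $t=\tV\Rightarrow s_i=s_{i+1}=\tV$ and $t=\tH\Rightarrow s_i=s_{i+1}=\tH$.

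Next I would prove a purely combinatorial description of the relevant subsets. Call a word $w\in\tS_m$ a \emph{product word} if $w=\tV^p\tH^q$ for some $p,q\ge0$; I claim that for $x\in P^\ell_m(E)$, $x\in P_m^{\ell,\tT,\tHV}(E)$ if and only if $Tx$ is not a product word. Indeed a product word contains no $\tT$ and has no $\tH$ immediately followed by a $\tV$, so it lies in neither $\tV^a\tH^b\tT\tS$ nor $\tV^a\tH^{b+1}\tV\tS$. Conversely, if $w=s_1\cdots s_m$ is not a product word, let $a$ be the length of its maximal leading $\tV$-run and $b$ the length of the maximal $\tH$-run immediately following it; since $w\ne\tV^a\tH^b$, the letter $s_{a+b+1}$ exists, is not $\tH$ (maximality of $b$), and, if $b=0$, not $\tV$ (maximality of $a$). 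So either $s_{a+b+1}=\tT$, which puts $w$ in $\tV^a\tH^b\tT\tS$, or $s_{a+b+1}=\tV$ with $b\ge1$, which puts $w$ in $\tV^a\tH^{(b-1)+1}\tV\tS$.

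Now I would combine the two. Suppose for contradiction that $T(\del^i x)=\tV^p\tH^q$ is a product word. The case $t=\tT$ is impossible. If $t=\tV$ then $s_i=s_{i+1}=\tV$, and since $t$ sits in position $i$ of $\tV^p\tH^q$ we get $i\le p$, so $s_1=\cdots=s_{i-1}=\tV$ while the factor $s_{i+2}\cdots s_n$ of $\tV^p\tH^q$ has the form $\tV^{p'}\tH^{q'}$; hence $Tx=\tV^{\,i+1+p'}\tH^{q'}$ is a product word. If $t=\tH$ then $s_i=s_{i+1}=\tH$ and $i>p$, so $s_{i+2}=\cdots=s_n=\tH$ while $s_1\cdots s_{i-1}$ has the form $\tV^{p'}\tH^{q'}$; hence $Tx=\tV^{p'}\tH^{\,q'+n-i+1}$ is again a product word. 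Either way $Tx$ is a product word, so $x\notin P_n^{\ell,\tT,\tHV}(E)$ by the previous paragraph, a contradiction. Therefore $\del^i x\in P_{n-1}^{\ell,\tT,\tHV}(E)$, and summing over $i$ gives $\del_n x\in D^\ell_{n-1}(E)$.

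I expect the real content to be the product-word characterization together with the observation that the letters $\tV$ and $\tH$ can only be created by collapsing $\tV\tV$ and $\tH\tH$; once these are isolated the argument is just prefix/suffix bookkeeping, the only delicate points being the degenerate indices $i=1$, $i=n-1$ and the value $b=0$. A direct case analysis on the position of $i$ relative to the distinguished prefix $\tV^m\tH^{m'}\tT$ or $\tV^m\tH^{m'+1}\tV$ is also possible but breaks into many subcases, so I would prefer the word-level argument above.
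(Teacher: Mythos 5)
Your argument is correct and is essentially the intended one: the paper's proof is just the one-line appeal to Lemma \ref{formula}, and your write-up fills in exactly the bookkeeping that lemma is meant to supply, namely that a face can only replace two adjacent letters $s_is_{i+1}$ by a letter in $\{\del(s_is_{i+1})\}$, that $\tV$ and $\tH$ arise only from $\tV\tV$ and $\tH\tH$, and hence that the complement condition ``$Tx$ is a word $\tV^p\tH^q$'' (which the paper uses implicitly in Proposition \ref{chainisom}) cannot be created by taking faces. No gaps; your product-word characterization of $P_n^{\ell,\tT,\tHV}(E)$ matches the paper's implicit description of the quotient basis.
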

\begin{proof}
     It follows from Lemma \ref{formula}.
\end{proof}

\begin{prop}\label{chainisom}
    Let $\pi : E \too B$ be a metric fibration.  We fix $b \in B$ and $F := \pi^{-1}b$. Then we have an isomorphism of chain complexes 
    \[
    \MC^{\ell}_{\ast}(E)/D_\ast^{\ell}(E)\cong \bigoplus_{\ell_{\tV} + \ell_{\tH} = \ell} \MC^{\ell_{\tV}}_\ast(F)\otimes \MC^{\ell_{\tH}}_\ast(B).
    \]
    
\end{prop}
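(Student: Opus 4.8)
Since $D^\ell_n(E)=\Z P_n^{\ell,\tT,\tHV}(E)$ is spanned by a subset of the basis $P_n^\ell(E)$ of $\MC^\ell_n(E)$, the quotient $\MC^\ell_n(E)/D^\ell_n(E)$ is free on $P_n^\ell(E)\sm P_n^{\ell,\tT,\tHV}(E)$, so the plan is to identify this set with the canonical basis of the right-hand side and then to check the identification commutes with the differentials. The first thing to nail down is the elementary combinatorial fact that a word $w\in\tS_n$ lies in neither $\tV^m\tH^{m'}\tT\tS$ nor $\tV^m\tH^{m'+1}\tV\tS$ (for any $m,m'\ge0$) if and only if $w=\tV^p\tH^q$ with $p+q=n$: writing $w=\tV^m\tH^{m'}w''$ with both blocks maximal, if $w''\ne\emptyset$ its first letter is forced to be $\tT$ or $\tV$, and either choice lands $w$ in one of the two families. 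Hence $\MC^\ell_n(E)/D^\ell_n(E)$ is free on the set $G^\ell_n$ of ``good'' tuples $x\in P^\ell_n(E)$ with $Tx=\tV^p\tH^q$.

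Next I would make a good tuple explicit. Using the isometric isomorphisms $\pi^{-1}b'\cong F$ from the Lemma before Definition~\ref{defcla}, fix for each $b'\in B$ an isometry $\phi_{b'}\colon F\xrightarrow{\sim}\pi^{-1}b'$. If $Tx=\tV^p\tH^q$, then $\pi x_0=\dots=\pi x_p=:c_0$, and because a horizontal pair $(x,x')$ necessarily satisfies $x'=x^{\pi x'}$ (immediate from the two fibration axioms), the tail $x_p,\dots,x_n$ is the iterated horizontal transport of $x_p$ over the points $c_j:=\pi x_{p+j}$. This suggests setting
\[
\Psi(x):=\bigl((\phi_{c_0}^{-1}x_0,\dots,\phi_{c_0}^{-1}x_p),\,(\pi x_p,\dots,\pi x_n)\bigr)\in P_p^{\ell_\tV}(F)\times P_q^{\ell_\tH}(B),\qquad \ell_\tV:=\sum_{i<p}d(x_i,x_{i+1}),
\]
with $\ell_\tH:=\ell-\ell_\tV$; the $F$-coordinate is a path since $\phi_{c_0}$ is an isometric bijection, the $B$-coordinate since each horizontal edge has $d(x_{p+j},x_{p+j+1})=d(c_j,c_{j+1})>0$. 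Conversely, from $\bigl((f_0,\dots,f_p),(c_0,\dots,c_q)\bigr)$ one reconstructs $x\in G^\ell_n$ by $x_i:=\phi_{c_0}(f_i)$ for $i\le p$ and $x_{p+j}:=(x_{p+j-1})^{c_j}$ for $1\le j\le q$, the first fibration axiom guaranteeing that each new pair is horizontal of the right length. A short check shows these constructions are mutually inverse, so $\Phi:=\Psi^{-1}$ induces an isomorphism of graded modules $\bigoplus_{\ell_\tV+\ell_\tH=\ell}\bigl(\MC_\ast^{\ell_\tV}(F)\otimes\MC_\ast^{\ell_\tH}(B)\bigr)_n\xrightarrow{\sim}\MC^\ell_n(E)/D^\ell_n(E)$.

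The heart of the proof is then to verify that $\Phi$ is a chain map, i.e.\ that $\del^E$ on the quotient agrees under $\Phi$ with the tensor differential $(\del f_\bullet)\otimes c_\bullet+(-1)^p f_\bullet\otimes(\del c_\bullet)$. I would apply $\del^E_n$ to a good tuple $x$ with $Tx=\tV^p\tH^q$ and sort the deleted vertices $x_i$ ($1\le i\le n-1$) into three types. \emph{Interior vertical} ($1\le i\le p-1$): the triple $(x_{i-1},x_i,x_{i+1})$ has type $\tV\tV$, which contracts to $\tV$ by Lemma~\ref{formula}(1), so the summand stays in $G^\ell_{n-1}$, and since $\phi_{c_0}$ is isometric $x_{i-1}\prec x_i\prec x_{i+1}$ holds iff the corresponding relation holds among the $f$'s; these summands reassemble into $(\del f_\bullet)\otimes c_\bullet$. \emph{The junction} ($i=p$, present only when $p,q\ge1$): the triple has type $\tV\tH$, which contracts to $\tT$ by Lemma~\ref{formula}(2), so the resulting word $\tV^{p-1}\tT\tH^{q-1}$ lies in $\tV^m\tH^{m'}\tT\tS$ and the summand vanishes in the quotient. \emph{Interior horizontal} ($i=p+j$, $1\le j\le q-1$): the triple has type $\tH\tH$, which by Lemma~\ref{formula}(3)--(4) contracts to $\tH$ exactly when $\pi x_{p+j-1}\prec\pi x_{p+j}\prec\pi x_{p+j+1}$ and to $\tT$ otherwise, while conversely, $\pi$ being $1$-Lipschitz together with the horizontality of the two pairs forces $c_{j-1}\prec c_j\prec c_{j+1}$ to imply $x_{p+j-1}\prec x_{p+j}\prec x_{p+j+1}$; hence modulo $D$ this summand survives precisely when $c_{j-1}\prec c_j\prec c_{j+1}$, it equals $\Phi\bigl(f_\bullet\otimes(c_0,\dots,\hat{c}_j,\dots,c_q)\bigr)$, and its sign $(-1)^{p+j}=(-1)^p(-1)^j$ is the right one, so these summands reassemble into $(-1)^p f_\bullet\otimes(\del c_\bullet)$. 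Combining the three cases gives $\del^E_n\Phi(f_\bullet\otimes c_\bullet)\equiv\Phi\bigl(\del(f_\bullet\otimes c_\bullet)\bigr)\pmod D$.

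The step I expect to be the main obstacle is this last one, and within it two points: the assertion that the junction summand is \emph{always} forbidden --- this is exactly what produces the Künneth splitting and what dictates the precise shape of $D^\ell_\ast(E)$ --- and the two-directional bookkeeping relating $\prec$ in $E$, in $F$, and in $B$ for the vertical and horizontal cases, which is where Lemma~\ref{formula} and the metric-fibration axioms do the real work. The combinatorial characterization of $G^\ell_n$ and the verification that $\Psi$ and $\Phi$ are mutually inverse should be routine by comparison.
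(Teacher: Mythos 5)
Your proposal is correct and follows essentially the same route as the paper: identify the quotient's basis with the tuples of type $\tV^m\tH^{n-m}$, set up the explicit mutually inverse maps via transport to a fixed fiber and projection to the base, and verify the chain-map property case by case (vertical interior, the $\tV\tH$ junction killed in the quotient, horizontal interior) using Lemma \ref{formula}. The extra details you supply (the combinatorial characterization of the complement of $P^{\ell,\tT,\tHV}_n(E)$, the fact that a horizontal pair is a transport, and the converse implication from $\prec$ in $B$ to $\prec$ in $E$) are points the paper leaves implicit, but the argument is the same.
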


\begin{proof}
    Note that the module $\MC^{\ell}_{n}(E)/D_n^{\ell}(E)$ is freely generated by tuples $x \in P^{\ell}_{n}(E)$ with $Tx = \tV^m\tH^{n-m}$ for some $0 \leq m \leq n$. For each $n\geq 0$,  we define a homomorphism $\varphi_n :  \MC^{\ell}_{n}(E)/D_n^{\ell}(E) \too \bigoplus_{\substack{\ell_{\tV} + \ell_{\tH} = \ell\\ m\geq 0}} \MC^{\ell_{\tV}}_{m}(F)\otimes \MC^{\ell_{\tH}}_{n-m}(B)$ by 
    \[
    \varphi_n(x_0, \dots, x_n) = (x_0^{b}, \dots, x_m^b)\otimes (\pi x_m, \dots, \pi x_n),
    \]
    where we suppose that $T(x_0, \dots, x_n) = \tV^m\tH^{n-m}$. This homomorphism has an inverse $\psi_n$ defined by 
    \begin{align*}
    \psi_n\left((f_0, \dots, f_{m})\otimes (b_0, \dots, b_{n-m})\right) = (f_0^{b_0}, \dots, f_{m}^{b_0}, f_{m}^{b_0b_1}, f_{m}^{b_0b_1b_2}, \dots, f_{m}^{b_0\dots b_{n-m}}),
    \end{align*}
    where we denote a point $(f_{m}^{b_0})^{b_1}$ by $f_{m}^{b_0b_1}$ and similarly for further iterations.
    Hence it reduces to show that $\varphi_\ast$ is a chain map. We denote the boundary operator on $\MC^{\ell}_{\ast}(E)/D_\ast^{\ell}(E)$ induced from $\del^{\ell}_\ast$ by $[\del^{\ell}]_\ast$ in the following. For $(x_0, \dots, x_n) \in \MC^{\ell}_{n}(E)/D_n^{\ell}(E)$ with $T(x_0, \dots, x_n) = \tV^m\tH^{n-m}$, we have 
    \begin{align*}
    [\del^{\ell}]_n(x_0, \dots, x_n) &= \sum_{\substack{x_{i-1}\prec x_i \prec x_{i+1}\\ T(x_{i-1}, x_{i+1}) \neq \tT}}(-1)^i(x_0, \dots, \hat{x}_i, \dots, x_n) \\
    &= \sum_{\substack{x_{i-1}\prec x_i \prec x_{i+1}\\ 1 \leq i \leq m-1}}(-1)^i(x_0, \dots, \hat{x}_i, \dots, x_m, \dots, x_n) \\
    &+ \sum_{\substack{\pi x_{i-1}\prec \pi x_i \prec \pi x_{i+1}\\ m+1 \leq i \leq n-1}}(-1)^i(x_0, \dots, x_m, \dots,  \hat{x}_i, \dots,  x_n),
    \end{align*}
    by Lemma \ref{formula} (2) and (4). Hence we obtain that 
    \begin{align*}
    \varphi_{n-1}[\del^{\ell}]_n(x_0, \dots, x_n) &= \sum_{\substack{x^b_{i-1}\prec x^b_i \prec x^b_{i+1}\\ 1 \leq i \leq m-1}}(-1)^i(x^b_0, \dots, \hat{x}_i^b, \dots, x^b_m) \otimes (\pi x_m, \dots, \pi x_n) \\
    &+ \sum_{\substack{\pi x_{i-1}\prec \pi x_i \prec \pi x_{i+1}\\ m+1\leq i \leq n-1}}(-1)^i(x^b_0, \dots, x^b_m) \otimes (\pi x_m, \dots,  \widehat{\pi x}_i, \dots, \pi x_n).
    \end{align*}
     On the other hand, for $\varphi_n(x_0, \dots, x_n) = (x^b_0, \dots, x^b_{m})\otimes (\pi x_m, \dots, \pi x_n) \in \MC^{\ell_{\tV}}_{m}(F)\otimes \MC^{\ell_{\tH}}_{n-m}(B)$, we have
    \begin{align*}
    (\del_m^{\ell_{\tV}}\otimes \del_{n-m}^{\ell_{\tH}})\varphi_n(x_0, \dots, x_n)  &= \sum_{\substack{x^b_{i-1}\prec x^b_i \prec x^b_{i+1}\\ 1 \leq i \leq m-1}}(-1)^i(x^b_0, \dots, \hat{x}_i^b, \dots, x^b_{m})\otimes (\pi x_m, \dots, \pi x_n) \\
    &+  \sum_{\substack{\pi x_{i-1}\prec \pi x_i \prec \pi x_{i+1}\\ m+1\leq i \leq n-1}}(-1)^{i}(x^b_0, \dots, x^b_{m})\otimes (\pi x_m, \dots, \widehat{\pi x}_i, \dots,   \pi x_n).
    \end{align*}
    Thus we obtain that $\varphi_{n-1}[\del^{\ell}]_n =  (\del_m^{\ell_{\tV}}\otimes \del_{n-m}^{\ell_{\tH}})\varphi_n$.
\end{proof}

\subsection{Algebraic Morse Theory}\label{alg1}
We recall the algebraic Morse theory studied in~\cite{Sk}. Let $C_{\ast} = (C_{\ast}, \partial_{\ast})$ be a chain complex with a decomposition $C_{k} = \bigoplus_{a \in I_{n}} C_{n, a}$ and $C_{n, a} \cong \Z$ for each $k$. For $a \in I_{n+1}$ and $b \in I_n$, let $f_{ab}\colon C_{n+1, a} \too  C_{n, b}$ be the composition
$
C_{n+1, a} \hookrightarrow C_{n+1} \xrightarrow{\partial_{n+1}}  C_{n} \twoheadrightarrow C_{n, b}.
$ We define a directed graph $\Gamma_{C_{\ast}}$ with vertices $\coprod_{n} I_{n}$ and directed edges $\{a \to b \mid f_{ab} \neq 0\}$. We recall terminologies on the matching.  
\begin{df}
\begin{enumerate}
\item A {\it matching} $M$ of a directed graph $\Gamma$ is a subset of directed edges $M \subset E(\Gamma)$ such that each two distinct edges in $M$ have no common vertices. 
\item For a matching $M$ of a directed graph, vertices that are not the endpoints of any edges in $M$ are called {\it critical}. 
\item For a matching $M$ of a directed graph $\Gamma$, we define a new directed graph $\Gamma^{M}$ by inverting the direction of all edges in $M$.
\end{enumerate}
\end{df}
\begin{df}\label{morsematching}
A matching $M$ on $\Gamma_{C_{\ast}}$  is called a {\it Morse matching} if it satisfies the following.
\begin{enumerate}
\item  $f_{ab}$ is an isomorphism if $a \to b \in M$. 
\item  $\Gamma_{C_{\ast}}^{M}$ is acyclic, that is, there are no closed paths in $\Gamma_{C_{\ast}}^{M}$ of the form
$
 a_{1}  \to b_{1} \to \dots \to b_{p-1} \to a_{p}=a_1
$
with $a_{i} \in I_{n+1}$ and $b_{i}\in I_{n}$ for some $p$.

\end{enumerate}
 \end{df}

 For a matching $M$ on $\Gamma_{C_{\ast}}$, we denote the subset of $I_n$ that consists of critical vertices by $\mathring{I}_{n}$. 

 \begin{prop}[\cite{Sk}]\label{morsefact}
 For a Morse matching $M$ on $\Gamma_{C_{\ast}}$, we have a chain complex $(\mathring{C}_{n} = \bigoplus_{a \in \mathring{I}_{n}} C_{n, a}, \mathring{\del}_\ast)$ that is  homotopy equivalent to $(C_\ast, \del_\ast)$.
 \end{prop}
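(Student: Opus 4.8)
The plan is to prove this by the now‑standard method of algebraic discrete Morse theory (so that in the paper one may simply cite \cite{Sk}): I would reduce $C_\ast$ onto its critical part by ``integrating a Morse flow'', that is, by a single idempotent chain map performing an iterated Gaussian elimination of all matched pairs at once.

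First I would fix the bookkeeping. For each $n$ write $I_n=\mathring I_n\sqcup I_n^{\mathrm u}\sqcup I_n^{\mathrm d}$, where $I_n^{\mathrm d}$ (resp.\ $I_n^{\mathrm u}$) is the set of indices appearing as the source (resp.\ target) of an edge of $M$; since $M$ is a matching these are exactly the non‑critical vertices, and $M$ restricts to a bijection $I_{n+1}^{\mathrm d}\xrightarrow{\ \sim\ }I_n^{\mathrm u}$, $a\mapsto b(a)$, with $f_{a,b(a)}\colon C_{n+1,a}\xrightarrow{\ \sim\ }C_{n,b(a)}$ an isomorphism by Definition \ref{morsematching}(1). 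Define $V_n\colon C_n\too C_{n+1}$ to be $0$ on $C_{n,c}$ for $c\notin I_n^{\mathrm u}$ and $-f_{a,b(a)}^{-1}$ on $C_{n,b(a)}$ (with image in $C_{n+1,a}\subset C_{n+1}$); then $V_{\ast+1}V_\ast=0$, so $V$ is a partial inverse of $\del$ along the matched edges. Put $\Phi_\ast:=\mathrm{id}_{C_\ast}+\del_{\ast+1}V_\ast+V_{\ast-1}\del_\ast$. One checks $\Phi$ is a chain map and that every summand of $\del V+V\del=\Phi-\mathrm{id}$ sends a basis element of $C_{n,c}$ into the span of the $C_{n,c'}$ with $c\to c'$ an edge of $\Gamma_{C_\ast}^{M}$.

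Next I would integrate the flow. Since $\Gamma_{C_\ast}^{M}$ has no directed cycles (Definition \ref{morsematching}(2)) and, in the settings where the theorem is applied, no infinite directed path starting at a fixed vertex, for every basis element $e$ the iterates $\Phi^k(e)$ are eventually constant; write $\Phi^\infty:=\lim_k\Phi^k$. This $\Phi^\infty$ is an idempotent chain endomorphism, its image $\widetilde C_\ast:=\Phi^\infty(C_\ast)$ is a subcomplex, and $h:=-\sum_{k\ge0}V\Phi^k$ (a locally finite sum by the same stabilisation) satisfies $\mathrm{id}_{C_\ast}-\Phi^\infty=\del h+h\del$, via the telescoping identity $\del h+h\del=-\sum_k(\del V+V\del)\Phi^k=-\sum_k(\Phi-\mathrm{id})\Phi^k=\mathrm{id}-\Phi^\infty$. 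Hence $\Phi^\infty\colon C_\ast\too\widetilde C_\ast$ and the inclusion $\widetilde C_\ast\hookrightarrow C_\ast$ are mutually inverse chain homotopy equivalences. Finally one checks that the composite $\widetilde C_\ast\hookrightarrow C_\ast\twoheadrightarrow\bigoplus_n\bigoplus_{a\in\mathring I_n}C_{n,a}$ is an isomorphism of graded abelian groups --- surjectivity because $\pi\Phi^\infty$ restricts to the identity on critical classes, and the rank count because $\operatorname{tr}\Phi^\infty_n=\#\mathring I_n$ --- and that transporting the differential of $\widetilde C_\ast$ along it yields exactly $(\mathring C_\ast,\mathring\del_\ast)$. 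Therefore $(\mathring C_\ast,\mathring\del_\ast)\cong\widetilde C_\ast\simeq(C_\ast,\del_\ast)$.

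The step I expect to be the main obstacle is precisely the convergence of $\Phi^k$, $\Phi^\infty$ and $h$: ``acyclic'' as stated rules out directed cycles but not infinite descending paths in $\Gamma_{C_\ast}^{M}$, so to make $\Phi^\infty$ and $h$ well defined one must use the extra finiteness available in the application (each relevant $I_n$ finite, or a well‑founded order compatible with $\Gamma_{C_\ast}^{M}$). Once that is granted the remainder is bookkeeping with signs and block matrices; expanding $\Phi^\infty$ and $h$ as signed sums over directed paths in $\Gamma_{C_\ast}^{M}$ recovers the customary path‑counting formula for $\mathring\del_\ast$, and the homotopy identity is the telescoping computation above.
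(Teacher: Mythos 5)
The paper does not actually prove this proposition; it is quoted from Sk\"oldberg \cite{Sk}, so there is nothing internal to compare against. Your sketch is the standard ``Morse flow / iterated Gaussian elimination'' proof (the route taken in Kozlov's and J\"ollenbeck--Welker's accounts), whereas Sk\"oldberg's own argument is phrased via splitting homotopies; these are essentially equivalent, and either would do as a proof of the cited statement. Your main flagged point is exactly the right one, and it is worth being precise about it: with the paper's Definition of a Morse matching (acyclicity only, i.e.\ no closed zig-zag paths), the proposition as stated is not true in general when the index sets $I_n$ are infinite --- one needs that $\Gamma_{C_\ast}^{M}$ has no infinite directed paths (a well-foundedness condition), which in \cite{Sk} is built into the definition of a Morse matching. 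This matters here because $E$ need not be finite, so $P^{\ell,\tT,\tHV}_n(E)$ is typically infinite. The gap is harmless in the application: by Lemma \ref{acyclic} the integer statistic $|{\cdot}^{\tHV}|$ strictly increases along directed zig-zag paths in $\Gamma_{D^\ell_\ast(E)}^{M}$, and in a fixed degree it is bounded above (by roughly $n^2/2$), so all such paths have bounded length and your $\Phi^\infty$ and $h$ are well defined there.

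One step of your outline does not survive the infinite setting and should be replaced: the rank/trace argument that the composite $\widetilde C_n\hookrightarrow C_n\twoheadrightarrow\bigoplus_{a\in\mathring I_n}C_{n,a}$ is an isomorphism ($\operatorname{tr}\Phi^\infty_n=\#\mathring I_n$) has no meaning when $I_n$ is infinite, and even in the finite case it is an unnecessarily indirect way to get injectivity. The standard repair is direct: expand $\Phi^\infty$ as a signed sum over directed paths in $\Gamma_{C_\ast}^{M}$ and check that on a matched ``source'' summand ($a\in I_n^{\mathrm d}$) and on a matched ``target'' summand ($a\in I_n^{\mathrm u}$) the limit $\Phi^\infty$ has no component back in the non-critical part except in a way controlled by $V$, so that $\ker(\widetilde C_n\to\mathring C_n)=0$; equivalently, follow \cite{Sk} and use the splitting homotopy to decompose $C_\ast$ as the direct sum of a contractible complex (spanned by the matched pairs) and a complex isomorphic to $(\mathring C_\ast,\mathring\del_\ast)$ with $\mathring\del$ given by the path-sum formula. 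Also, your phrase ``$c\to c'$ an edge of $\Gamma_{C_\ast}^{M}$'' for the support of $\Phi-\mathrm{id}$ should read ``$c'$ reachable from $c$ by a directed path of length two in $\Gamma_{C_\ast}^{M}$ (one non-matching edge and one inverted matching edge)''; this is the statement that actually feeds the stabilization argument. With those two repairs your argument is a correct proof of the proposition under the hypotheses in force in this paper.
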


 \subsection{matching on $D_\ast^\ell(E)$}
We apply algebraic Morse theory to the chain complex $(D^\ell_\ast(E), \del^{\ell}_\ast)$ with the decomposition $D^{\ell}_n(E) = \bigoplus_{a \in P^{\ell, \tT, \tHV}_n(E)} D_{n, a} $ and $D_{n, a} \cong \Z$. For $a = (x_0, \dots, x_{n+1}) \in P^{\ell, \tT, \tHV}_{n+1}(E)$ and $b \in P^{\ell, \tT, \tHV}_{n}(E)$, we write $b = \del^{\ell}_{n+1, i}a$ if $b = (x_0, \dots, \hat{x}_i, \dots, x_{n+1})$. It is immediately verified that $f_{ab}$ is an isomorphism for $a \in P^{\ell, \tT, \tHV}_{n+1}(E)$ and $b \in P^{\ell, \tT, \tHV}_{n}(E)$ if and only if $b = \del^{\ell}_{n+1, i}a$ for some $i$.

 \begin{df}
 \begin{enumerate}
\item For $a = (x_0, \dots, x_n) \in P^{\ell, \tT}_n(E)$ with $Ta \in \tV^m\tH^{m'}\tT\tS$, we define 
\[
a^{\tH\tV} := (x_0, \dots, x_{m+m'}, x_{m+m'}^{\pi x_{m+m'+1}},  x_{m+m'+1}, \dots, x_n).
\]
\item For $(x_0, \dots, x_n) \in P^{\ell}_n(E)$, we define 
\[
|(x_0, \dots, x_n)| := \sum_{T(x_i, x_{i+1})= \tV} i.
\]
\end{enumerate}
\end{df}
Namely, we obtain a tuple $a^{\tH\tV}$ by filling the gap of the first tilted part of $a$. The filled part becomes horizontal-vertical triple.
\begin{lem}\label{acyclic}
    Let $a_1 \neq a_2 \in P^{\ell, \tT}_n(E)$. If $a_2 = \del^{\ell}_{n+1, i}a_1^{\tHV}$ for some $i$,  then we have $|a_1^{\tHV}| < |a_2^{\tHV}|$.
\end{lem}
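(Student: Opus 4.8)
The plan is to fix coordinates for $a_1^{\tHV}$, cut the choice of the deleted vertex down to three cases using Lemma~\ref{formula}, and in each surviving case read off $|a_2^{\tHV}|$ by an explicit index count. Write $a_1 = (x_0,\dots,x_n)$ with $Ta_1 = \tV^m\tH^{m'}\tT\cdots$, put $k:=m+m'$ and $y:=x_k^{\pi x_{k+1}}$, so that $a_1^{\tHV}=(z_0,\dots,z_{n+1})$ with $z_i=x_i$ for $i\le k$, $z_{k+1}=y$, and $z_i=x_{i-1}$ for $i\ge k+1$. Since $y=x_k^{\pi x_{k+1}}$ we get $d(x_k,y)=d(\pi x_k,\pi x_{k+1})=d(\pi x_k,\pi y)$, hence $T(x_k,y)=\tH$, and $\pi y=\pi x_{k+1}$ gives $T(y,x_{k+1})=\tV$; the tilt hypothesis $0<d(\pi x_k,\pi x_{k+1})<d(x_k,x_{k+1})$ together with axiom (2) of a metric fibration shows $y\neq x_k,x_{k+1}$, so indeed $a_1^{\tHV}\in P^\ell_{n+1}(E)$ with $T(a_1^{\tHV})=\tV^m\tH^{m'+1}\tV\,w$, where $w:=T(x_{k+1},x_{k+2})\cdots T(x_{n-1},x_n)$ occupies the pair-indices $k+2,\dots,n$. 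I record once and for all that $|a_1^{\tHV}|=\binom{m}{2}+(k+1)+\Sigma$, where $\Sigma$ is the sum of those $j\in\{k+2,\dots,n\}$ at which $w$ shows the letter $\tV$.

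Since $a_2=\del_i a_1^{\tHV}$ occurs in the boundary, $i$ ranges over interior vertices $1\le i\le n$ with $z_{i-1}\prec z_i\prec z_{i+1}$, and $i\neq k+1$ because $\del_{k+1}a_1^{\tHV}=a_1\neq a_2$. The first step is to use Lemma~\ref{formula} to reduce the hypothesis $a_2\in P_n^{\ell,\tT}(E)$ to three cases: (a) $i=m$; (b) $m+1\le i\le k$ with $T(z_{i-1},z_{i+1})=\tT$; (c) $i=k+2$ with $T(z_{k+2},z_{k+3})=\tH$. Deleting $z_i$ merges two consecutive letters of $T(a_1^{\tHV})$, and Lemma~\ref{formula} identifies the merge: for $i\le m-1$ the word of $a_2$ begins $\tV^{m-1}\tH^{m'+1}\tV$; for $m+1\le i\le k$ with the $\tH\tH$-merge equal to $\tH$ (the alternative in Lemma~\ref{formula}(4)) it begins $\tV^m\tH^{m'}\tV$; for $i=k+2$ with $T(z_{k+2},z_{k+3})=\tV$, and for $i\ge k+3$, it begins $\tV^m\tH^{m'+1}\tV$. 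None of these prefixes lies in $\tV^{\ast}\tH^{\ast}\tT\tS$, contradicting $a_2\in P_n^{\ell,\tT}(E)$; this proves the reduction.

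In each surviving case I then write out $T(a_2)$, hence $T(a_2^{\tHV})$ (split the first $\tT$ of $T(a_2)$ into $\tH\tV$), and compute $|a_2^{\tHV}|$. The simplification that keeps this painless is that deleting one vertex and then re-inserting one vertex for the $\tHV$-filling leaves the head at length $k+2$, so the tail word returns to the pair-indices it occupied in $a_1^{\tHV}$ and contributes the same $\Sigma$ to both; the difference is then a small head computation. I expect: in case (a), $T(a_2^{\tHV})=\tV^{m-1}\tH\tV\tH^{m'}\tV\,w$ and $|a_2^{\tHV}|-|a_1^{\tHV}|=\binom{m-1}{2}+m-\binom{m}{2}=1$; in case (b), $T(a_2^{\tHV})=\tV^m\tH^{i-m}\tV\tH^{k-i}\tV\,w$ and $|a_2^{\tHV}|-|a_1^{\tHV}|=i\ge m+1$; in case (c), $T(a_2^{\tHV})=\tV^m\tH^{m'+2}\tV\,(T(x_{k+2},x_{k+3})\cdots)$ and $|a_2^{\tHV}|-|a_1^{\tHV}|=(k+2)-(k+1)=1$. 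In every case the difference is $\ge 1>0$, which gives $|a_1^{\tHV}|<|a_2^{\tHV}|$.

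The main obstacle here is not conceptual but bookkeeping: one must check that the three-case reduction is exhaustive, and then track the shift of pair-indices twice in each case — once when $z_i$ is removed, once when $(-)^{\tHV}$ re-inserts a vertex — so as to verify that the tail word really lands back on $\{k+2,\dots,n\}$ (equivalently, that the refilled head again has length $k+2$). Everything else is an application of Lemma~\ref{formula} and the identity $\binom{m}{2}=\binom{m-1}{2}+(m-1)$.
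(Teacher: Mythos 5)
Your strategy is the same as the paper's (use Lemma \ref{formula} to pin down the possible words $Ta_2$, refill the first tilted pair, and compare weights), and your index bookkeeping in cases (a), (b), (c) is correct. However, the case reduction is not exhaustive as stated. At $i=k+2$ you only consider $T(z_{k+2},z_{k+3})\in\{\tH,\tV\}$ and silently omit $T(z_{k+2},z_{k+3})=\tT$, i.e.\ the situation where the letter of $Ta_1$ immediately following its first tilted pair is again $\tT$. Nothing forbids this (consecutive tilted pairs do occur, and a $\tV\tT$ triple with $z_{k+1}\prec z_{k+2}\prec z_{k+3}$ is realizable, cf.\ Example \ref{egT}(3)), and this subcase is not excluded by your argument: by Lemma \ref{formula}(2) the merge satisfies $\{\del\,\tV\tT\}=\{\tT\}$, so deleting $z_{k+2}$ yields $Ta_2=\tV^{m}\tH^{m'+1}\tT{\tt w}$, which does lie in $\tV^{m}\tH^{m'+1}\tT\tS$, hence $a_2\in P^{\ell,\tT}_n(E)$. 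This is exactly the paper's third alternative, which does not require the letter after the deleted spot to be $\tH$; so the sentence ``this proves the reduction'' is not justified as written.

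The repair is immediate: broaden case (c) to $T(z_{k+2},z_{k+3})\in\{\tH,\tT\}$. Your case-(c) computation then goes through verbatim, because the letter ${\tt x}=T(x_{k+1},x_{k+2})$ contributes nothing to $|a_1^{\tHV}|$ whenever ${\tt x}\neq\tV$, and after deleting $z_{k+2}$ and refilling, $Ta_2^{\tHV}=\tV^{m}\tH^{m'+2}\tV{\tt w}$ with the tail ${\tt w}$ occupying the same pair-indices as in $a_1^{\tHV}$, so the difference is again $(k+2)-(k+1)=1$. With that one subcase added, your proof is complete and coincides with the paper's three-case argument (your exponent $\tH^{k-i}$ in case (b) is the consistent count; the paper's $\tH^{m'-m''-2}$ differs only by an off-by-one in its choice of $m''$).
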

\begin{proof}
    Suppose that $Ta_1 = \tV^m\tH^{m'}\tT{\tt xw}$ for some ${\tt x} \in \tS_1$ and ${\tt w} \in \tS$. Then we have $Ta_1^\tHV =  \tV^m\tH^{m'+1}\tV{\tt xw}$. If we have $\del^{\ell}_{n+1, i}a_1^{\tHV} = a_2 \in P^{\ell, \tT}_n(E)$, then we should have  
    \[
    Ta_2 \in \{ \tV^{m-1}\tT\tH^{m'}\tV{\tt xw},  \tV^m\tH^{m''}\tT\tH^{m'-m''-2}\tV{\tt xw},  \tV^m\tH^{m'+1}\tT{\tt w}\},
    \]
    by Lemma \ref{formula}. In each case, we have 
    \[
    Ta_2^\tHV \in \{\tV^{m-1}\tHV\tH^{m'}\tV{\tt xw},  \tV^m\tH^{m''+1}\tV\tH^{m'-m''-2}\tV{\tt xw},  \tV^m\tH^{m'+2}\tV{\tt w} \}
    \]
    respectively. In all cases, we have $|a_1^{\tHV}| < |a_2^{\tHV}|$. 
\end{proof}
We define a matching $M$ on $D_\ast^\ell(E)$ by 
\[
M = \{f_{a^{\tHV}a} : a^{\tHV} \to a \mid a \in P^{\ell, \tT}_n(E) \}.
\]
This is apparently a matching, and is also acyclic by Lemma \ref{acyclic}. Further, there is no critical vertex in $\Gamma_{D_\ast^\ell(E)}$. Thus we obtain the following by Proposition \ref{morsefact}.

\begin{prop}\label{dcontractible}
    The chain complex $D_\ast^\ell(E)$ is contractible.
\end{prop}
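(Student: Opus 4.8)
The plan is to apply the algebraic Morse theory of Proposition~\ref{morsefact} to $D^\ell_\ast(E)$ with the decomposition $D^\ell_n(E)=\bigoplus_{a\in P^{\ell,\tT,\tHV}_n(E)}D_{n,a}$ and the matching $M=\{f_{a^{\tHV}a}:a^{\tHV}\to a\mid a\in P^{\ell,\tT}_n(E)\}$ introduced above. Once one checks that $M$ is a matching, that it is a Morse matching, and that it has no critical vertex, Proposition~\ref{morsefact} yields a chain complex $\mathring D_\ast$, homotopy equivalent to $D^\ell_\ast(E)$, with $\mathring D_n=\bigoplus_{a\in\mathring I_n}D_{n,a}$; since $\mathring I_n=\varnothing$ for all $n$, we get $\mathring D_\ast=0$, so $D^\ell_\ast(E)$ is contractible.

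First I would do the bookkeeping around $a\mapsto a^{\tHV}$. If $a\in P^{\ell,\tT}_n(E)$ with $Ta=\tV^m\tH^{m'}\tT\tS$, taking $m$ to be the length of the leading $\tV$-run and $m'$ that of the ensuing $\tH$-run, the inserted point $x_{m+m'}^{\pi x_{m+m'+1}}$ differs from its two neighbours and satisfies $x_{m+m'}\prec x_{m+m'}^{\pi x_{m+m'+1}}\prec x_{m+m'+1}$ by the defining axioms of a metric fibration; hence $a^{\tHV}\in P^\ell_{n+1}(E)$ with $Ta^{\tHV}=\tV^m\tH^{m'+1}\tV\tS$, so $a^{\tHV}\in P^{\ell,\tHV}_{n+1}(E)$, and $a=\del^{\ell}_{n+1,\,m+m'+1}a^{\tHV}$, so $f_{a^{\tHV}a}$ is an isomorphism by the criterion already recorded. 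Since the leading $\tV$-run and subsequent $\tH$-run of a word in $\tS$ are well defined and the next symbol is $\tT$ for words in $\tV^m\tH^{m'}\tT\tS$ but $\tV$ for words in $\tV^m\tH^{m'+1}\tV\tS$, the sets $P^{\ell,\tT}_n(E)$ and $P^{\ell,\tHV}_n(E)$ are disjoint, and $a\mapsto a^{\tHV}$ is injective (read $m,m'$ off $Ta^{\tHV}$); hence no vertex is both a source and a target of an edge of $M$, so $M$ is indeed a matching. The same analysis shows every $b\in P^{\ell,\tHV}_n(E)$ equals $a^{\tHV}$ for some $a\in P^{\ell,\tT}_{n-1}(E)$: delete the vertex closing the $\tH$-run of $b$, noting that by the fibration axioms this vertex is the fibrewise image of its predecessor and that deletion lands in $P^{\ell,\tT}_{n-1}(E)$ and reproduces $b$ upon reinsertion. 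Therefore every $a\in P^{\ell,\tT}_n(E)$ is the source of the edge $f_{a^{\tHV}a}$ and every $b\in P^{\ell,\tHV}_n(E)$ is its target, so $\mathring I_n=\varnothing$.

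It remains to see that $\Gamma^M_{D^\ell_\ast(E)}$ is acyclic, which is where Lemma~\ref{acyclic} does the real work. A closed path in $\Gamma^M_{D^\ell_\ast(E)}$ alternates reversed edges $b\to b^{\tHV}$ (with $b\in P^{\ell,\tT}_n(E)$) and ordinary boundary edges $b^{\tHV}\to b'$ not lying in $M$, where $b'=\del^{\ell}_{n+1,i}b^{\tHV}\in P^{\ell,\tT}_n(E)$ (it must be of tilted type to begin the next reversed edge) and $b'\neq b$; Lemma~\ref{acyclic} then gives $|b^{\tHV}|<|(b')^{\tHV}|$, so $|\cdot|$ strictly increases along the path and cannot return to its initial value, a contradiction. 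Thus $M$ is a Morse matching and the conclusion follows. The only point requiring genuine metric input beyond Lemma~\ref{acyclic} is the verification that the matching is \emph{perfect}---that $a\mapsto a^{\tHV}$ is a bijection between the tilted and the horizontal-vertical generators in adjacent degrees, compatible with one face map---and I expect that combinatorial-metric bookkeeping, routine but fiddly, to absorb most of the effort.
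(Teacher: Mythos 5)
Your proof is correct and follows exactly the paper's argument: the matching $M=\{f_{a^{\tHV}a}\}$ is a Morse matching on $\Gamma_{D^\ell_\ast(E)}$ with no critical vertices (acyclicity via Lemma \ref{acyclic}), so Proposition \ref{morsefact} gives a homotopy equivalence with the zero complex. The only difference is that you spell out the verifications the paper dismisses as apparent---that $a\mapsto a^{\tHV}$ is a bijection from $P^{\ell,\tT}_n(E)$ onto $P^{\ell,\tHV}_{n+1}(E)$, that the two generator types are disjoint, and that $|\cdot|$ strictly increases along any alternating cycle---and these verifications are all sound.
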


\begin{cor}\label{main1}
    Let $\pi : E \too B$ a metric fibration, and let $F$ be its fiber. For $\ell>0$, we have a homotopy equivalence and an isomorphism
    \[
    \MC^{\ell}_\ast(E) \simeq \MC^\ell_\ast(E)/D^\ell_\ast(E) \cong \bigoplus_{\ell_\tV + \ell_\tH = \ell} \MC^{\ell_\tV}_\ast(F)\otimes \MC^{\ell_\tH}_\ast(B).
    \]
\end{cor}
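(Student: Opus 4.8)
The plan is to read the statement off from Proposition \ref{chainisom} and Proposition \ref{dcontractible}, the only genuinely new ingredient being a standard piece of homological algebra that upgrades ``$D^\ell_\ast(E)$ is a contractible subcomplex'' into ``the quotient map $\MC^\ell_\ast(E)\to\MC^\ell_\ast(E)/D^\ell_\ast(E)$ is a chain homotopy equivalence''. The right-hand isomorphism is exactly Proposition \ref{chainisom}, so the whole task reduces to proving $\MC^\ell_\ast(E)\simeq\MC^\ell_\ast(E)/D^\ell_\ast(E)$.

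First I would record the short exact sequence of chain complexes
\[
0 \too D^\ell_\ast(E) \too \MC^\ell_\ast(E) \too \MC^\ell_\ast(E)/D^\ell_\ast(E) \too 0,
\]
which exists by Lemma \ref{subchain}, and observe that it is split in each degree: since $D^\ell_n(E)=\Z P^{\ell,\tT,\tHV}_n(E)$ is spanned by a subset of the canonical basis $P^\ell_n(E)$ of $\MC^\ell_n(E)$, the quotient $\MC^\ell_n(E)/D^\ell_n(E)$ is free on the complementary subset $\{x\in P^\ell_n(E)\mid Tx=\tV^m\tH^{n-m}\}$, which lifts to a graded complement $Q_\ast$ of $D^\ell_\ast(E)$ inside $\MC^\ell_\ast(E)$ (note that $Q_\ast$ need not be a subcomplex, since removing the middle vertex of an $\tH\tH$ part may produce a $\tT$ tuple, by Lemma \ref{formula}).

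Next I would invoke the fact that a degreewise-split extension of a chain complex by a \emph{contractible} one is chain homotopy equivalent to its quotient, which in a couple of lines goes as follows. Writing the differential of $\MC^\ell_\ast(E)$ in block form $\bigl(\begin{smallmatrix}\del^D & \phi\\ 0 & \psi\end{smallmatrix}\bigr)$ with respect to $\MC^\ell_n(E)=D^\ell_n(E)\oplus Q_n$, and choosing a contracting homotopy $h$ of $D^\ell_\ast(E)$ (available by Proposition \ref{dcontractible}), one checks directly that $q\mapsto(-h\phi(q),q)$ is a chain map $(Q_\ast,\psi)\to\MC^\ell_\ast(E)$ which is a two-sided homotopy inverse to the projection, a chain homotopy being $(d,q)\mapsto(h(d),0)$. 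Composing the resulting homotopy equivalence $\MC^\ell_\ast(E)\simeq (Q_\ast,\psi)=\MC^\ell_\ast(E)/D^\ell_\ast(E)$ with Proposition \ref{chainisom} gives the corollary. Alternatively, one can simply note that the quotient map is a quasi-isomorphism by the long exact sequence together with the contractibility of $D^\ell_\ast(E)$, and that a quasi-isomorphism between bounded-below complexes of free abelian groups is automatically a homotopy equivalence.

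I do not expect a real obstacle: all the substantive content --- constructing $D^\ell_\ast(E)$, showing it is a subcomplex, proving its contractibility via algebraic Morse theory, and computing the quotient --- has already been established in the preceding results. The only point requiring a little care is to obtain an honest chain homotopy equivalence rather than a bare quasi-isomorphism, and this is exactly why one wants the degreewise splitting, which is available here because $D^\ell_\ast(E)$ is generated by part of a basis.
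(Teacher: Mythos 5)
Your proposal is correct and follows essentially the same route as the paper: the paper deduces the corollary from Propositions \ref{chainisom} and \ref{dcontractible} together with the fact that a quasi-isomorphism between levelwise free chain complexes is a homotopy equivalence, which is exactly your ``alternative'' argument. Your primary argument via the degreewise splitting and the explicit homotopy inverse $q\mapsto(-h\phi(q),q)$ is a correct, slightly more constructive elaboration of the same point, not a genuinely different approach.
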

\begin{proof}
    It follows from Propositions \ref{chainisom}, \ref{dcontractible} and the fact that each quasi-isomorphism between levelwise free chain complexes is induced from a homotopy equivalence.
\end{proof}
\begin{rem}
    Note that, by Corollary \ref{main1}, we reprove  the K\"{u}nneth theorem in \cite{HW} Proposition 8.4, namely $ \MH^{\ell}_\ast(F\times B) \cong H_\ast(\bigoplus_{\ell_\tV + \ell_\tH = \ell} \MC^{\ell_\tV}_\ast(F)\otimes \MC^{\ell_\tH}_\ast(B)).$
\end{rem}

\section{Equivalence of magnitude homotopy type}\label{htpytype}

\subsection{$\Delta$-set}

We denote the category of finite ordinals $\{0 < 1< \dots < n\} = :[n]$'s and order preserving maps between them by $\Delta$. We define maps $\delta_{n,i} : [n-1] \too [n]$ and $\sigma_{n,i} : [n+1] \too [n]$ for $0 \leq i \leq n$ by $\delta_{n, i} j = \begin{cases}
        j & j < i, \\
        j+1 & j \geq i,
    \end{cases}$ and $ \sigma_{n, i} j = \begin{cases}
        j & j \leq i, \\
        j-1 & j > i.\end{cases}$ We abbreviate them to $\delta_i$ and $ \sigma_i$. Note that all order preserving map $f : [m] \too [n]$ can be uniquely decomposed as a composition of order preserving maps $f = \phi_1(f)\phi_2(f)$ such that $\phi_1(f)$ is injective and $\phi_2(f)$ is surjective. Also, we can decompose $\phi_1(f)$ and $\phi_2(f)$ into compositions of $\delta_i$'s and $\sigma_i$'s respectively.
\begin{df}
    A family of sets $X_\bullet = \{X_n\}_{n\geq 0}$ equipped with maps $d_i : X_n \too X_{n-1} (0 \leq i \leq n)$ is called a {\it $\Delta$-set} if it satisfies $d_id_j = d_{j-1}d_i$ for $i<j$. Equivalently, a $\Delta$-set is a functor $\Delta_{\sf inj}^{\rm op} \too \Set$, where $\Delta_{\sf inj}$ is the category of finite ordinals and order preserving injections that are generated from $\delta_i$'s. We define the category of $\Delta$-sets by $\Delta\Set := \Set^{\Delta_{\sf inj}^{\rm op}}$.
\end{df}

 Note that the inclusion $j : \Delta_{\sf inj} \too \Delta$ induces a functor $j^\ast : \sSet \too \Delta\Set$. Namely, for a simplicial set $S_\bullet$, we can obtain a $\Delta$-set $j^\ast S_\bullet$ by forgetting the degeneracy maps. The functor $j^\ast$ has the left adjoint (\cite{RS} Theorem 1.7)  $j_! : \Delta\Set \too \sSet$ defined by 
 \[
 (j_!X_\bullet)_n = \{(p, f) \mid p \in X_{n-k}, f : [n] \twoheadrightarrow [n-k] \in \Delta , 0\leq k \leq n\}.
 \]
The structure maps $d_i : (j_!X_\bullet)_n \too (j_!X_\bullet)_{n-1}, s_i : (j_!X_\bullet)_n \too (j_!X_\bullet)_{n+1} $ for $ 0\leq i \leq n$ are defined by 

 \begin{align*}
     d_i(p, f) &= (\big(\phi_1(f\delta_i)\big)^\ast p, \phi_2(f\delta_i)),\\
     s_i(p, f) &= (p, f\sigma_i),
 \end{align*}
where we use the following composition and factorization of maps: 
\begin{equation*}
    \xymatrix{
    [n-1] \ar[r]^-{\delta_i} \ar[dr]_-{\phi_2(f\delta_i)} & [n] \ar[r]^-{f} & [n-k] \\
     & [m]  \ar[ur]_-{\phi_1(f\delta_i)}&.
    }
\end{equation*}
\begin{eg}
    \begin{enumerate}
        \item For a metric space $X$, $\ell \in \R_{\geq 0}$ and $n \in \Z_{\geq 0}$, we define ${\sf m}^{\ell}_n(X) := P^{\ell}_n(X)\cup\{\ast\}$. We also define maps $d_i : {\sf m}^{\ell}_n(X) \too {\sf m}^{\ell}_{n-1}(X)$ for $0 \leq i \leq n$ by
        \begin{align*}
            d_i(\ast) &= \ast, \\
            d_i(x_0, \dots, x_n) &= \begin{cases}
                (x_0, \dots, \hat{x}_i, \dots, x_n) & \text{if } x_{i-1}\prec x_i \prec x_{i+1}, 1\leq i \leq n-1, \\
                \ast & \text{otherwise.}
            \end{cases}
        \end{align*}
Then it is immediate to verify that ${\sf m}^{\ell}_\bullet(X)$ is a $\Delta$-set.

\item For a metric space $X$, we denote Hepworth and Willerton's simplicial set (\cite{HW} Definition 8.1) by ${\sf M}^\ell_\bullet(X)$. That is defined by 
\[
{\sf M}^\ell_n(X) = \{(x_0, \dots, x_n) \in X^{n+1}\mid \sum_{i=0}^{n-1}d(x_{i}, x_{i+1}) = \ell\}\cup\{\ast\},
\]
for $\ell\in\R_{\geq 0}$ and $n \in \Z_{\geq 0}$. The maps $d_i$'s are defined by the same formula as those of ${\sf m}^\ell_\bullet$, and $s_i$'s are defined by $s_i(x_0, \dots, x_n) = (x_0, \dots, x_i, x_i, \dots, x_n)$ and $s_i(\ast) = \ast$.

\item  For a point $\ast \in \sSet$, defined by $\ast_n = \{\ast\}$, we have
\[
(j_!j^\ast \ast)_n \cong \{f : [n] \twoheadrightarrow [n-k] \mid 0 \leq k \leq n\}, 
\]
and $d_if = \phi_2(f\delta_i), s_i f = f\sigma_i$ for $f : [n] \twoheadrightarrow [n-k]$. Note that the non-degenerate simplices of $(j_!j^\ast \ast)_\bullet$ are only identities ${\rm id}_{[n]}$, and its geometric realaization $|(j_!j^\ast \ast)_\bullet|$ is $S^\infty$.

\item For a metric space $X$ and $\ell \in \R_{\geq 0}$, we define a simplicial set $\wt{\sf M}^\ell_\bullet(X)$ by \[
\wt{\sf M}^\ell_n(X) = \{(x_0, \dots, x_n) \in X^{n+1}\mid \sum_{i=0}^{n-1}d(x_{i}, x_{i+1}) = \ell\}\cup\{f : [n] \twoheadrightarrow [n-k] \mid 0 \leq k \leq n\}.
\]
We define 
\begin{align*}
            d_i(f) &= \phi_2(f\delta_i), \\
            d_i(x_0, \dots, x_n) &= \begin{cases}
                (x_0, \dots, \hat{x}_i, \dots, x_n) & \text{if } x_{i-1}\prec x_i \prec x_{i+1}, 1\leq i \leq n-1, \\
                {\rm id}_{[n-1]}& \text{otherwise.}
            \end{cases}
        \end{align*}

and 
\begin{align*}
            s_i(f) &= f\sigma_i, \\
            s_i(x_0, \dots, x_n) &= (x_0, \dots, x_i, x_i, \dots, x_n).
        \end{align*}
    \end{enumerate}
\end{eg}

\begin{prop}\label{jmM}
    We have $j_!{\sf m}^\ell_\bullet(X) \cong \wt{\sf M}^\ell_\bullet (X)$.
\end{prop}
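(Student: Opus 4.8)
The plan is to write down an explicit levelwise bijection $\Phi_n\colon (j_!{\sf m}^\ell_\bullet(X))_n\too\wt{\sf M}^\ell_n(X)$ and to check that it commutes with all face and degeneracy maps. Recall that an $n$-simplex of $j_!{\sf m}^\ell_\bullet(X)$ is a pair $(p,f)$ with $f\colon[n]\twoheadrightarrow[n-k]$ and $p\in{\sf m}^\ell_{n-k}(X)=P^\ell_{n-k}(X)\cup\{\ast\}$. I would set $\Phi_n(\ast,f):=f$, and for $p=(y_0,\dots,y_{n-k})\in P^\ell_{n-k}(X)$ set $\Phi_n(p,f):=(y_{f(0)},\dots,y_{f(n)})$, the tuple obtained by reindexing $p$ along $f$. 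Since $f$ is order preserving and surjective, consecutive values $f(i),f(i+1)$ either coincide or differ by one and each gap of $[n-k]$ is realized exactly once, so $\sum_{i=0}^{n-1}d(y_{f(i)},y_{f(i+1)})$ telescopes to $\sum_{j=0}^{n-k-1}d(y_j,y_{j+1})=\ell$, and $\Phi_n(p,f)$ does land in the tuple part of $\wt{\sf M}^\ell_n(X)$. Bijectivity is clear: the inverse sends a surjection $f$ to $(\ast,f)$ and a tuple $t=(x_0,\dots,x_n)$ to $(p_t,f_t)$, where $f_t\colon[n]\twoheadrightarrow[n-k]$ collapses each maximal run of equal consecutive entries (explicitly $f_t(i)=\#\{\,j<i\mid x_j\neq x_{j+1}\,\}$) and $p_t$ lists the distinct run-values; the same telescoping shows $p_t\in P^\ell_{n-k}(X)$, and the two assignments are visibly mutually inverse.

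Compatibility with degeneracies is immediate: $s_i(p,f)=(p,f\sigma_i)$, and reindexing $p$ along $f\sigma_i$ just doubles the $i$-th entry of $\Phi_n(p,f)$, which is precisely $s_i$ on the tuple side, while on the basepoint part both sides are $f\sigma_i$. The substantial point is compatibility with faces, $\Phi_{n-1}\circ d_i=d_i\circ\Phi_n$. Since $d_i(p,f)=\bigl((\phi_1(f\delta_i))^{\ast}p,\ \phi_2(f\delta_i)\bigr)$, one must understand $(\phi_1(f\delta_i))^{\ast}$ acting on ${\sf m}^\ell$, and the analysis splits according to the image $f([n]\setminus\{i\})$ of $f\delta_i$. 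If $f(i)$ has a preimage other than $i$ — equivalently $x_i$ agrees with an adjacent entry (with the obvious reading at $i=0,n$) — then $f\delta_i$ stays surjective, $\phi_1(f\delta_i)=\mathrm{id}$, and $\Phi_{n-1}(d_i(p,f))=(x_0,\dots,\hat x_i,\dots,x_n)$. If $i$ is the unique preimage of $f(i)$, then $\phi_1(f\delta_i)$ is the injection omitting $f(i)$, so $(\phi_1(f\delta_i))^{\ast}p=d_{f(i)}(p)$ computed in the $\Delta$-set ${\sf m}^\ell$; this is either $(y_0,\dots,\hat y_{f(i)},\dots,y_{n-k})$, and a short index chase using $f\delta_i=\delta_{f(i)}\circ\phi_2(f\delta_i)$ again gives $\Phi_{n-1}(d_i(p,f))=(x_0,\dots,\hat x_i,\dots,x_n)$, or it collapses to $\ast$, in which case $\Phi_{n-1}(d_i(p,f))=\phi_2(f\delta_i)$. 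It then remains to match these three outcomes against $d_i$ applied to the tuple $\Phi_n(p,f)$ in $\wt{\sf M}^\ell_\bullet(X)$: the first two are exactly the cases where the face remains a tuple and deletes $x_i$, and the third is the remaining case, where it drops into the degenerate surjection part at precisely $\phi_2(f\delta_i)$.

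I expect the main obstacle to be this last matching step, i.e.\ reconciling the two conventions for ``an improper face''. In the $\Delta$-set ${\sf m}^\ell$ a bad deletion simply produces the basepoint $\ast$, but in the simplicial set $\wt{\sf M}^\ell_\bullet(X)$ the resulting simplex must be the correct iterated degeneracy of the basepoint, namely the surjection $\phi_2(f\delta_i)$, and one has to check it is this one and not a coarser one. The boundary indices $i\in\{0,n\}$ — which never satisfy the interior removal condition but still sometimes leave the face a tuple (when $x_0=x_1$, resp.\ $x_{n-1}=x_n$) — are where the bookkeeping is most delicate; handling them uniformly with $1\le i\le n-1$ is the heart of the verification.
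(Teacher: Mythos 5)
Your map $\Phi_n$ is exactly the bijection used in the paper's proof (the identity on the surjection part, and reindexing $p$ along $f$, i.e.\ $f^{\sf M}p$, on the tuple part), and your verification plan --- degeneracies trivially, faces via the case split on whether $\phi_1(f\delta_i)$ is the identity or the coface $\delta_{f(i)}$ and whether the face in ${\sf m}^\ell_\bullet$ collapses to $\ast$ --- is essentially the same argument. The delicate collapse case you flag is exactly the step the paper settles by functoriality, writing $d_i^{\sf M}f^{\sf M}p=\phi_2^{\sf M}\phi_1^{\sf M}p$ with $\phi_1^{\sf M}p$ the simplex ${\rm id}$ in the basepoint part and $\phi_2^{\sf M}({\rm id})=\phi_2(f\delta_i)$, confirming the value you predicted.
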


\begin{proof}
In the following, we denote the maps $j_!{\sf m}^\ell_n(X) \too j_!{\sf m}^\ell_m(X)$ and  $\wt{\sf M}^{\ell}_n(X) \too \wt{\sf M}^{\ell}_m(X)$ induced from a map $f : [m] \too [n]$ by $f^{\sf m}$ and $f^{\sf M}$ respectively. We also denote the structure maps $d_i, s_i$'s of $j_!{\sf m}^\ell_\bullet(X)$ and $\wt{\sf M}^{\ell}_\bullet(X)$ by $d_i^{\sf m}, s_i^{\sf m}$ and $d_i^{\sf M}, s_i^{\sf M}$'s respectively. We define a map $F_n : (j_!{\sf m}^\ell_\bullet(X))_n \too \wt{\sf M}^\ell_n (X)$ by
\[
F_n (p, f) = \begin{cases} f & p= \ast \\
f^{\sf M}p & p \neq  \ast,\end{cases}
\]
where we identify an element $p \in P^{\ell}_{n-k}(X) \subset {\sf m}^\ell_{n-k}(X)$ with an element $p \in \wt{\sf M}^\ell_{n-k} (X)$. This map is obviously a bijection, hence it reduces to show that this defines a morphism of simplicial sets. Now we have 

\begin{align*}
F_{n+1} s_i^{\sf m}(p, f) &= F_{n+1}(p, f\sigma_i)=\begin{cases} f\sigma_{i} & p= \ast \\
s_i^{\sf M}f^{\sf M}p & p\neq \ast\end{cases} =s_i^{\sf M}F_{n}(p,f).
\end{align*}
We also have
\begin{align*}
F_{n-1} d_i^{\sf m}(p, f) &= F_{n-1}(\phi_1^{\sf m} p, \phi_2) =\begin{cases} \phi_2 & \phi_1^{\sf m} p= \ast \\
\phi_2^{\sf M}\phi_1^{\sf M} p & \phi_1^{\sf m} p\neq \ast,\end{cases}
\end{align*}
where we abbreviate $\phi_1(f\delta_i), \phi_2(f\delta_i)$ to  $\phi_1, \phi_2$ respectively, and we identify $\phi^{\sf m}_1p \in {\sf m}^\ell_{\bullet}(X)$ with $\phi^{\sf M}_1p \in \wt{\sf M}^\ell_{\bullet} (X)$. Also, we have
 \begin{align*}
 d_i^{\sf M}F_n(p, f) &= \begin{cases} d_i^{\sf M}f & p= \ast, \\
 d_i^{\sf M}f^{\sf M} p & p \neq \ast,\end{cases}\\
 &= \begin{cases} \phi_2 & p= \ast, \\
 \phi_2^{\sf M}\phi_1^{\sf M} p & p\neq \ast,\end{cases}\\
 &= \begin{cases} \phi_2 & p= \ast, \\
 \phi_2 & p \neq \ast, \phi_1^{\sf m} p =\ast ,\\
 \phi_2^{\sf M}\phi_1^{\sf M} p & p\neq \ast, \phi_1^{\sf m} p \neq \ast, \\
\end{cases}\\
&=\begin{cases} \phi_2 & \phi_1^{\sf m} p= \ast, \\
\phi_2^{\sf M}\phi_1^{\sf M} p & \phi_1^{\sf m} p\neq \ast.\end{cases}
\end{align*}
Hence $F_\bullet$ is an isomorphism of simplicial sets.

\end{proof}

\begin{prop}\label{wtmm}
    We have a homotopy equivalence $|\wt{\sf M}^{\ell}_\bullet(X)| \simeq |{\sf M}^{\ell}_\bullet(X)|$.
\end{prop}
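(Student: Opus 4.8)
The plan is to realize $\wt{\sf M}^{\ell}_\bullet(X)$ as the simplicial set ${\sf M}^{\ell}_\bullet(X)$ with its basepoint replaced by a contractible subcomplex, and then to collapse that subcomplex. Concretely, first I would write down the map of simplicial sets $q_\bullet\colon \wt{\sf M}^{\ell}_\bullet(X)\too {\sf M}^{\ell}_\bullet(X)$ that is the identity on every tuple $(x_0,\dots,x_n)$ with $\sum_{i}d(x_i,x_{i+1})=\ell$ and sends every surjection $f\colon[n]\twoheadrightarrow[n-k]$ to the basepoint $\ast$. That $q_\bullet$ commutes with all the $d_i$ and $s_i$ is immediate from the defining formulas: on tuples the two simplicial sets carry literally the same structure maps, the only difference being that a ``collapsed'' face lands in $\mathrm{id}_{[n-1]}$ for $\wt{\sf M}^{\ell}_\bullet(X)$ and in $\ast$ for ${\sf M}^{\ell}_\bullet(X)$, and both of these are sent to $\ast$ by $q_\bullet$; while $d_i f=\phi_2(f\delta_i)$ and $s_i f=f\sigma_i$ are again surjections, all of which $q_\bullet$ sends to $\ast=d_i\ast=s_i\ast$.

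Next, let $K_\bullet\subset\wt{\sf M}^{\ell}_\bullet(X)$ be the simplicial subset whose $n$-simplices are the surjections $\{f\colon[n]\twoheadrightarrow[n-k]\mid 0\le k\le n\}$; it is closed under the structure maps by the formulas just recalled, and it is exactly the simplicial set $j_!j^{\ast}\ast$ computed in the preceding example, so $|K_\bullet|\cong S^{\infty}$. Moreover $q_\bullet$ induces an isomorphism of simplicial sets $\wt{\sf M}^{\ell}_\bullet(X)/K_\bullet\cong{\sf M}^{\ell}_\bullet(X)$: in each degree, collapsing $K_n$ to a single point turns $\wt{\sf M}^{\ell}_n(X)$ into the set of tuples of total length $\ell$ together with one basepoint, i.e.\ ${\sf M}^{\ell}_n(X)$, and the structure maps match by the same bookkeeping as above.

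Finally, geometric realization is a left adjoint, so it takes the pushout presenting the quotient $\wt{\sf M}^{\ell}_\bullet(X)/K_\bullet$ to the corresponding pushout of spaces; hence $|{\sf M}^{\ell}_\bullet(X)|\cong|\wt{\sf M}^{\ell}_\bullet(X)|/|K_\bullet|$, with $|q_\bullet|$ identified with the collapse map. The inclusion $|K_\bullet|\hookrightarrow|\wt{\sf M}^{\ell}_\bullet(X)|$ is that of a CW subcomplex, hence a cofibration, and for a CW pair $(Y,A)$ with $A$ contractible the quotient map $Y\too Y/A$ is a homotopy equivalence; since $|K_\bullet|\cong S^{\infty}$ is contractible this yields the desired equivalence $|q_\bullet|\colon|\wt{\sf M}^{\ell}_\bullet(X)|\simeq|{\sf M}^{\ell}_\bullet(X)|$. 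The only non-formal input is the contractibility of $|K_\bullet|=S^{\infty}$, already recorded in the preceding example; the verification that $q_\bullet$ is simplicial and that ${\sf M}^{\ell}_\bullet(X)$ is precisely $\wt{\sf M}^{\ell}_\bullet(X)/K_\bullet$ is the only step needing mild care, and I expect no genuine obstacle beyond it.
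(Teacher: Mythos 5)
Your proposal is correct and follows essentially the same route as the paper: identify the simplicial subset of surjections with $j_!j^{\ast}\ast$, observe that the quotient of $\wt{\sf M}^{\ell}_\bullet(X)$ by it is ${\sf M}^{\ell}_\bullet(X)$, and collapse the contractible subcomplex $|j_!j^{\ast}\ast|\simeq S^{\infty}$. You merely spell out the cofibration/quotient bookkeeping that the paper leaves implicit.
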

\begin{proof}
Obviously we have an inclusion $j_!j^\ast\ast \too \wt{\sf M}^{\ell}_\bullet(X)$, and its quotient map $\wt{\sf M}^{\ell}_\bullet(X)\too {\sf M}^{\ell}_\bullet(X)$. Hence it induces a sequence $|j_!j^\ast\ast| \too |\wt{\sf M}^{\ell}_\bullet(X)|\too |{\sf M}^{\ell}_\bullet(X)|$. Since $|j_!j^\ast\ast| \simeq S^\infty$ is a subcomplex of $|\wt{\sf M}^{\ell}_\bullet(X)|$, we conclude that $|\wt{\sf M}^{\ell}_\bullet(X)| \simeq |{\sf M}^{\ell}_\bullet(X)|$.
\end{proof}









\subsection{$D^\ell_\bullet(E) \subset {\sf m}^\ell_\bullet(E)$}

\begin{df}
    For a metric fibration $\pi : E \too B$, we define a $\Delta$-subset $D^\ell_\bullet(E) \subset {\sf m}^\ell_\bullet(E)$ by $D^\ell_n(E) = P^{\ell, \tT, \tHV}_n(E)\cup\{\ast\}$ for $\ell \in \R_{\geq 0}$.
\end{df}
We can verify that $D^\ell_\bullet(E)$ is indeed a $\Delta$-set by Lemma \ref{formula}.

\begin{lem}\label{dcont2}
    $|j_!D^\ell_\bullet(E)|$ is contractible.
\end{lem}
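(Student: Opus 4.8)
The plan is to mirror the chain-level argument of Section~\ref{alg1}, replacing the contractible subcomplex $D^\ell_\ast(E)$ by the $\Delta$-subset $D^\ell_\bullet(E)$ and algebraic Morse theory by a discrete-Morse-theoretic (or direct deformation-retraction) argument on the simplicial set $j_!D^\ell_\bullet(E)$. First I would use Proposition~\ref{jmM}, applied to the sub-$\Delta$-set $D^\ell_\bullet(E)\subset{\sf m}^\ell_\bullet(E)$, to identify $j_!D^\ell_\bullet(E)$ with the simplicial subset $\wt{\sf M}{}^\ell_\bullet$ of $\wt{\sf M}{}^\ell_\bullet(E)$ spanned by the tuples in $P^{\ell,\tT,\tHV}_\bullet(E)$ together with all the surjections $f:[n]\twoheadrightarrow[n-k]$. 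Since $j_!j^\ast\ast$ sits inside this as a simplicial subset with $|j_!j^\ast\ast|\simeq S^\infty$ contractible, exactly as in Proposition~\ref{wtmm}, it suffices to show that the quotient $|\wt{\sf M}{}^\ell_\bullet/\, j_!j^\ast\ast|$ is contractible; this quotient is the geometric realization of the pointed simplicial set ${\sf M}^{\ell,\tT,\tHV}_\bullet(E)$ with $n$-simplices $P^{\ell,\tT,\tHV}_n(E)\cup\{\ast\}$ and face/degeneracy maps as in ${\sf M}^\ell_\bullet$.

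The heart of the matter is then to contract $|{\sf M}^{\ell,\tT,\tHV}_\bullet(E)|$. Here I would reuse the matching $a\mapsto a^{\tHV}$ from the chain-level proof: for $a\in P^{\ell,\tT}_n(E)$ one fills the first tilted gap to get $a^{\tHV}\in P^{\ell,\tHV}_{n+1}(E)$, and this is a degree $+1$ pairing on the generators of ${\sf M}^{\ell,\tT,\tHV}_\bullet(E)$ with no unpaired (critical) non-basepoint cell, by the same case analysis as in Lemma~\ref{formula} that underlies Lemma~\ref{subchain}. The acyclicity of the matching is Lemma~\ref{acyclic}: the potential $|{-}|$ strictly increases along any zig-zag, so there is no closed directed cycle. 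I would invoke the simplicial/topological version of discrete Morse theory for (pointed) semi-simplicial sets — a perfect acyclic matching in which every non-basepoint simplex is matched yields a pointed simplicial set whose realization is contractible (equivalently, the realization collapses onto the point) — to conclude $|{\sf M}^{\ell,\tT,\tHV}_\bullet(E)|\simeq\ast$, hence $|j_!D^\ell_\bullet(E)|$ is contractible.

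The main obstacle I anticipate is the bookkeeping needed to run discrete Morse theory in the semi-simplicial (rather than regular-CW) setting: the geometric realization $|j_!D^\ell_\bullet(E)|$ is an honest CW complex, but the cells of $j_!D^\ell_\bullet(E)$ — i.e.\ the non-degenerate simplices of the associated simplicial set — are indexed by pairs $(p,f)$, and one must check that the matching $a\leftrightarrow a^{\tHV}$ lifts to a well-defined acyclic matching on those cells whose critical cells are exactly the non-degenerate simplices of $j_!j^\ast\ast$ (the identities ${\rm id}_{[n]}$), which realize to $S^\infty$. An alternative that sidesteps the CW-Morse machinery is to build an explicit simplicial homotopy (a contracting homotopy on ${\sf M}^{\ell,\tT,\tHV}_\bullet(E)$ relative to $\ast$) out of the fill-the-first-tilted-gap operation, analogous to the algebraic contraction produced by the Morse matching; I would fall back to this if the CW statement needs more care. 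Either way, once $|j_!D^\ell_\bullet(E)|$ is shown contractible, Lemma~\ref{dcont2} follows.
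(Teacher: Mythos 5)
Your opening reduction is fine and matches the paper's: via Propositions \ref{jmM} and \ref{wtmm} one passes to the pointed simplicial set with $n$-simplices $P^{\ell,\tT,\tHV}_n(E)\cup\{\ast\}$, and the matching data you want to reuse is indeed available there — every $\tHV$-type tuple is $a^{\tHV}$ for a unique tilted tuple $a$ (a horizontal pair $(x,y)$ automatically satisfies $y=x^{\pi y}$), so the matching is perfect, and Lemma \ref{acyclic} gives acyclicity with all gradient paths finite. The genuine gap is the pivotal step where you ``invoke the simplicial/topological version of discrete Morse theory'' to conclude contractibility of the realization. In the situation at hand this is not an off-the-shelf theorem: the complex is infinite-dimensional, has infinitely many cells in each degree, and its CW structure is highly non-regular (every outer face of every simplex is glued to the basepoint), whereas the standard topological statements of discrete Morse theory are proved for finite, or at least regular/compact, complexes. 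A statement in the needed generality does exist (K.~Brown's collapsing schemes, or versions of discrete Morse theory for infinite CW complexes with a properness/well-foundedness hypothesis), but then you must actually verify its hypotheses: that each matched face $a=d_{m+m'+1}a^{\tHV}$ is a regular face of $a^{\tHV}$ occurring with incidence $\pm1$, and that there are no infinite descending gradient chains (this is where the bounded, strictly increasing potential $|\cdot|$ of Lemma \ref{acyclic} enters). None of this is carried out, and your proposed fallback — an explicit contracting simplicial homotopy built from the fill-the-first-tilted-gap operator — is not something that operator readily provides: it raises dimension by one, repairs only the first tilted pair, and does not obviously assemble into the higher simplicial data of a homotopy from the identity to the constant map.

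For comparison, the paper sidesteps topological Morse theory entirely. After the same reduction to the simplicial subset $K_\bullet\subset{\sf M}^\ell_\bullet(E)$ generated by $P^{\ell,\tT,\tHV}_\bullet(E)$, it notes that the non-degenerate simplices are exactly these tuples, so the associated chain complex is $D^\ell_\ast(E)$ of Definition \ref{defd}, already shown contractible in Proposition \ref{dcontractible} (the Morse matching is used only algebraically, via \cite{Sk}); it then checks that $|K_\bullet|$ is simply connected by a short fundamental groupoid computation using the $2$-simplices $(x_0,x_1)^{\tHV}$, and concludes by Whitehead's theorem. If you want to keep your route, the cleanest repair is to cite a collapsing-scheme type theorem precisely and verify regularity of the matched pairs and well-foundedness of the gradient flow; otherwise, replacing your Morse step by ``trivial reduced homology (Proposition \ref{dcontractible}) plus simple connectivity plus Whitehead'' closes the gap with far less machinery.
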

\begin{proof}
    By the same argument as the proof of Proposition \ref{wtmm}, $|j_!D^\ell_\bullet(E)|$ is homotopy equivalent to the geometric realization of a simplicial subset $K_\bullet \subset {\sf M}^\ell_\bullet(E)$ generated from the family of sets $P^{\ell, \tT, \tHV}_\bullet(E)$. Since the non-degenerate simplices of $K_\bullet$ are elements of $P^{\ell, \tT, \tHV}_n(E)$'s, the chain complex $C_\ast K$ is homotopy equivalent to the chain complex $D^\ell_\ast(E)$ of Definition \ref{defd}, which is contractible. Therefore it reduces to show that $|K_\bullet|$ is simply connected. Recall that the fundamental groupoid $\Pi_1 |K_\bullet|$ is equivalent to the fundamental groupoid $\Pi_1 K_\bullet$, whose objects are vertices of $K_\bullet$ and morphisms are generated by edges of $K_\bullet$ with the identification $d_0\sigma d_2\sigma \sim d_1\sigma$ for $\sigma \in K_2$. Now $\Pi_1 K_\bullet$ has only one object, and each morphism is a sequence of tuples $(x_0, x_1)$ with $T(x_0, x_1) = \tT$. Since we have $(x_0, x_1)=d_1(x_0, x_1)^{\tHV}\sim d_0(x_0, x_1)^{\tHV}d_2(x_0, x_1)^{\tHV} \sim \ast$, this groupoid is a trivial group.
\end{proof}
\begin{prop}\label{heq2}
    We have a homotopy equivalence $|j_!{\sf m}^\ell_\bullet(E)| \simeq |j_!{\sf m}^\ell_\bullet(E)/j_!D^\ell_\bullet(E)|$.
\end{prop}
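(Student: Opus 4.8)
The plan is to deduce the proposition from the elementary fact that collapsing a contractible subcomplex of a CW complex is a homotopy equivalence, combined with the compatibility of geometric realization with colimits. Concretely, I would show that $|j_!D^\ell_\bullet(E)|$ sits inside $|j_!{\sf m}^\ell_\bullet(E)|$ as a contractible subcomplex, and then identify the topological quotient with the realization of the simplicial quotient.

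First I would check that the inclusion of $\Delta$-sets $D^\ell_\bullet(E)\hookrightarrow{\sf m}^\ell_\bullet(E)$ is carried by $j_!$ to an inclusion of simplicial sets. This is visible from the explicit description $(j_!X_\bullet)_n=\{(p,f)\mid p\in X_{n-k},\ f\colon[n]\twoheadrightarrow[n-k],\ 0\le k\le n\}$: an injection $X_\bullet\hookrightarrow Y_\bullet$ induces the levelwise injection $(p,f)\mapsto(p,f)$, compatible with the structure maps $d_i,s_i$ of $j_!$ by the defining formulas. Hence $j_!D^\ell_\bullet(E)$ is a simplicial subset of $j_!{\sf m}^\ell_\bullet(E)$, and therefore $|j_!D^\ell_\bullet(E)|\hookrightarrow|j_!{\sf m}^\ell_\bullet(E)|$ is an inclusion of a CW subcomplex, in particular a cofibration. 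By Lemma \ref{dcont2} this subcomplex is contractible, so the collapse map
\[
|j_!{\sf m}^\ell_\bullet(E)|\too |j_!{\sf m}^\ell_\bullet(E)|\big/|j_!D^\ell_\bullet(E)|
\]
is a homotopy equivalence (collapsing a contractible subcomplex of a CW complex is a homotopy equivalence). Finally, since geometric realization $|-|\colon\sSet\too\Top$ is a left adjoint it preserves colimits; in particular it carries the quotient $j_!{\sf m}^\ell_\bullet(E)/j_!D^\ell_\bullet(E)$, which is the pushout of $j_!{\sf m}^\ell_\bullet(E)\hookleftarrow j_!D^\ell_\bullet(E)\too \ast$, to $|j_!{\sf m}^\ell_\bullet(E)|\big/|j_!D^\ell_\bullet(E)|$. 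Composing the last two facts gives the asserted homotopy equivalence.

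The argument is essentially formal once Lemma \ref{dcont2} is in hand. The two points that require attention are: that $j_!$ genuinely sends the inclusion to a subcomplex (so that we are collapsing an honest CW pair), which is the short check above; and keeping the quotient convention straight, namely that $j_!{\sf m}^\ell_\bullet(E)/j_!D^\ell_\bullet(E)$ means the ordinary simplicial-set quotient collapsing the subcomplex to a base vertex, which is exactly the operation realization turns into the topological quotient. All of the genuinely substantive work, namely the contractibility of $|j_!D^\ell_\bullet(E)|$, has already been carried out in the proof of Lemma \ref{dcont2} (via the algebraic Morse matching together with the fundamental-groupoid computation), so no further obstacle arises here.
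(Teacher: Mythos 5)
Your argument is correct and is essentially the paper's own: the paper proves this by the same mechanism as Proposition \ref{wtmm}, namely that $j_!D^\ell_\bullet(E)$ includes as a simplicial subcomplex whose realization is contractible by Lemma \ref{dcont2}, so collapsing it (and identifying the realization of the simplicial quotient with the topological quotient) gives the homotopy equivalence. Your write-up just makes explicit the routine points (that $j_!$ preserves the inclusion and that realization, being a left adjoint, preserves the quotient) which the paper leaves implicit.
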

\begin{proof}
   Same as Proposition \ref{wtmm}.
\end{proof}

\begin{prop}\label{deltaisom}
    We have ${\sf m}^\ell_\bullet(E)/D^\ell_\bullet(E) \cong {\sf m}^\ell_\bullet(F\times B)/D^\ell_\bullet(F\times B)$, where $F = \pi^{-1}b$ for a fixed $b\in B$.
\end{prop}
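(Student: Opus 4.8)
The plan is to build a single pointed $\Delta$-set $N^\ell_\bullet$ out of the metric spaces $F$ and $B$ alone, together with an isomorphism $\varphi_\bullet\colon {\sf m}^\ell_\bullet(E)/D^\ell_\bullet(E)\xrightarrow{\cong}N^\ell_\bullet$ whose recipe refers to $E$ only through its fiber $F$, its base $B$ and the fibration axioms ``in the abstract''; applying this construction to $\pi$ and to the trivial fibration $\mathrm{pr}_B\colon F\times B\too B$ (with the $\ell^1$-metric $d((f,c),(f',c'))=d_F(f,f')+d_B(c,c')$, for which $\mathrm{pr}_B^{-1}(b)=F\times\{b\}\cong F$) then yields ${\sf m}^\ell_\bullet(E)/D^\ell_\bullet(E)\cong N^\ell_\bullet\cong {\sf m}^\ell_\bullet(F\times B)/D^\ell_\bullet(F\times B)$.

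First I record, exactly as noted at the start of the proof of Proposition \ref{chainisom}, that the non-basepoint $n$-simplices of ${\sf m}^\ell_\bullet(E)/D^\ell_\bullet(E)$ are precisely the $x\in P^\ell_n(E)$ with $Tx=\tV^m\tH^{n-m}$ for a unique $0\le m\le n$: indeed the words of $\tS_n$ lying in neither $\tV^m\tH^{m'}\tT\tS$ nor $\tV^m\tH^{m'+1}\tV\tS$ are exactly $\tV^0\tH^n,\dots,\tV^n\tH^0$, so $P^\ell_n(E)=P^{\ell,\tT,\tHV}_n(E)\sqcup\{x\in P^\ell_n(E):Tx=\tV^m\tH^{n-m}\text{ for some }m\}$. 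I then set
\[
N^\ell_n:=\{\ast\}\ \sqcup\ \coprod_{\substack{\ell_\tV+\ell_\tH=\ell\\ 0\le m\le n}}P^{\ell_\tV}_m(F)\times P^{\ell_\tH}_{n-m}(B),
\]
and define $\varphi_n$ on simplices by $\varphi_n(\ast)=\ast$ and $\varphi_n(x_0,\dots,x_n)=\big((x_0^b,\dots,x_m^b),(\pi x_m,\dots,\pi x_n)\big)$ when $Tx=\tV^m\tH^{n-m}$, i.e.\ the set-level shadow of the chain map of Proposition \ref{chainisom}. Since $x\mapsto x^b$ is an isometry on each fiber (the lemma preceding Definition \ref{defcla}) and $d(x_i,x_{i+1})=d(\pi x_i,\pi x_{i+1})$ along the horizontal part, $\varphi_n$ does land in the index-$m$ summand for the splitting $\ell_\tV=\sum_{i<m}d(x_i,x_{i+1})$, $\ell_\tH=\ell-\ell_\tV$, and the two-sided inverse $\psi_n$ of Proposition \ref{chainisom} (iterated fiber transport) shows $\varphi_n$ is bijective.

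The remaining content is to transport the $\Delta$-structure along $\varphi_\bullet$ and check that the transported face maps no longer see $E$. This is nothing but the set-level, sign-free form of the identity $\varphi_{n-1}[\del^\ell]_n=(\del^{\ell_\tV}_m\otimes\del^{\ell_\tH}_{n-m})\varphi_n$ proved in Proposition \ref{chainisom}: for $x$ with $Tx=\tV^m\tH^{n-m}$ one has $[d_0]x=[d_n]x=\ast$; $[d_m]x=\ast$ (clear if $m\in\{0,n\}$ or if $x_{m-1}\prec x_m\prec x_{m+1}$ fails, and otherwise $d_mx\in D^\ell_{n-1}(E)$ because $T(x_{m-1},x_{m+1})=\tT$ by Lemma \ref{formula}(2)); for $1\le i\le m-1$, $[d_i]x=d_ix$ or $\ast$ according to whether $x_{i-1}^b\prec x_i^b\prec x_{i+1}^b$ holds in $F$ (the fiber isometry and Lemma \ref{formula}(1) show the result is again of shape $\tV^{m-1}\tH^{n-m}$, so not further collapsed); and for $m+1\le i\le n-1$, $[d_i]x=d_ix$ or $\ast$ according to whether $\pi x_{i-1}\prec\pi x_i\prec\pi x_{i+1}$ holds in $B$ (Lemma \ref{formula}(3),(4) together with $\pi$ being non-expansive). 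Reading this off through $\varphi_\bullet$ shows that the induced face $d^N_i$ on the index-$m$ summand of $N^\ell_\bullet$ is the constant map to $\ast$ for $i\in\{0,m,n\}$, the $i$-th face map of ${\sf m}^{\ell_\tV}_\bullet(F)$ on the $F$-coordinate (identity on the $B$-coordinate) for $1\le i\le m-1$, and the $(i-m)$-th face map of ${\sf m}^{\ell_\tH}_\bullet(B)$ on the $B$-coordinate (identity on the $F$-coordinate) for $m+1\le i\le n-1$. All of this is phrased purely in terms of $P^\bullet_\bullet(F)$, $P^\bullet_\bullet(B)$ and the relation $\prec$ on $F$ and $B$, so $N^\ell_\bullet$ is a $\Delta$-set depending only on $F$ and $B$, and $\varphi_\bullet$ is a $\Delta$-isomorphism.

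Finally, $F\times B$ with the $\ell^1$-metric is a metric fibration over $B$ via $\mathrm{pr}_B$, with fiber $F\times\{b\}\cong F$ and fiber transport $(f,c)\mapsto(f,c')$; so the construction above applies to it verbatim and produces a $\Delta$-isomorphism $\varphi^{F\times B}_\bullet\colon {\sf m}^\ell_\bullet(F\times B)/D^\ell_\bullet(F\times B)\xrightarrow{\cong}N^\ell_\bullet$ with the \emph{same} target, whence the proposition by composition. The only genuine work is the face-map computation of the third paragraph, which is line for line the set-theoretic image of the chain-level verification already carried out in Proposition \ref{chainisom}; the point that demands care is the bookkeeping at the three distinguished indices $i\in\{0,m,n\}$ and in the degenerate cases $m=0$ (purely horizontal, $\ell_\tV=0$) and $m=n$ (purely vertical, $\ell_\tH=0$), where some of the index ranges above become empty.
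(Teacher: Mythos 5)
Your proof is correct and is essentially the paper's argument in a symmetrized packaging: the paper directly defines an isomorphism $\varphi_\bullet\colon {\sf m}^\ell_\bullet(E)/D^\ell_\bullet(E)\too{\sf m}^\ell_\bullet(F\times B)/D^\ell_\bullet(F\times B)$ by $(x_0,\dots,x_n)\mapsto\big((x_0^{b},\pi x_0),\dots,(x_m^b,\pi x_m),(x_m^b,\pi x_{m+1}),\dots,(x_m^b,\pi x_n)\big)$ with the same explicit inverse $\psi_\bullet$, and this is exactly the composite of your isomorphism for $E$ with the inverse of your isomorphism for the trivial fibration $F\times B$ onto your intermediate model $N^\ell_\bullet$. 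Your third-paragraph face-map analysis (including the collapse at $i\in\{0,m,n\}$ via Lemma \ref{formula}) is precisely the verification the paper compresses into ``it can be verified in the same manner as Proposition \ref{chainisom}.''
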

\begin{proof}
    We define a map $\varphi_\bullet : {\sf m}^\ell_\bullet(E)/D^\ell_\bullet(E) \too {\sf m}^\ell_\bullet(F\times B)/D^\ell_\bullet(F\times B)$ by $\varphi_n(\ast) = \ast$ and 
     \begin{align*}
    &\varphi_n(x_0, \dots, x_n)  \\
    &=((x_0^{b}, \pi x_0), \dots, (x_i^b, \pi x_i),  \dots, (x_m^b, \pi x_m),(x_m^b, \pi x_{m+1}), \dots, (x_m^b, \pi x_{m+j}) \dots,  (x_m^b, \pi x_{n})),
    \end{align*}
    where we suppose that $T(x_0, \dots, x_n) = \tV^m\tH^{n-m}$. This map has an inverse $\psi_\bullet$ defined by 
    \begin{align*}
    \psi_n((f_0, b_0), \dots, (f_{m}, b_0), (f_m, b_1), \dots, (f_m, b_{n-m}))= (f_0^{b_0}, \dots, f_{m}^{b_0}, f_{m}^{b_0b_1}, f_{m}^{b_0b_1b_2}, \dots, f_{m}^{b_0\dots b_{n-m}}).
    \end{align*}
    Hence it reduces to show that $\varphi_\bullet$ is a morphism of $\Delta$-sets, but it can be verified in the same manner as Propposition \ref{chainisom}. 
\end{proof}

\begin{cor}\label{main2}
    Let $\pi : E \too B$ be a metric fibration and let $F$ be its fiber. Then we have a homotopy equivalence $|{\sf M}^{\ell}_\bullet(E)| \simeq |{\sf M}^{\ell}_\bullet(F\times B)|$. 
\end{cor}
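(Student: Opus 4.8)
The plan is to assemble the statement from the isomorphisms and homotopy equivalences already in hand, applied at once to $E$ and to the trivial metric fibration $\mathrm{pr} : F \times B \too B$. The first thing I would record is that, equipped with the $\ell^1$-product metric, $F \times B$ really is a metric fibration with fiber $F$ — the unique lift of $(f,b)$ over $b'' \in B$ is $(f, b'')$ — so Proposition \ref{deltaisom}, Lemma \ref{dcont2} and Proposition \ref{heq2} all apply to $F \times B$ exactly as they do to $E$.

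Next I would observe that $j_! : \Delta\Set \too \sSet$, being a left adjoint to $j^\ast$, preserves colimits; in particular it carries the pointed quotient ${\sf m}^\ell_\bullet(X)/D^\ell_\bullet(X)$ — the pushout of $\ast \leftarrow D^\ell_\bullet(X) \hookrightarrow {\sf m}^\ell_\bullet(X)$ — to $j_!{\sf m}^\ell_\bullet(X)/j_!D^\ell_\bullet(X)$. Combining this with Propositions \ref{jmM}, \ref{wtmm} and \ref{heq2}, I obtain, whenever $X$ is the total space of a metric fibration, a chain
\[
|{\sf M}^\ell_\bullet(X)| \simeq |\wt{\sf M}^\ell_\bullet(X)| = |j_!{\sf m}^\ell_\bullet(X)| \simeq |j_!{\sf m}^\ell_\bullet(X)/j_!D^\ell_\bullet(X)| \cong \bigl| j_!\bigl( {\sf m}^\ell_\bullet(X)/D^\ell_\bullet(X) \bigr) \bigr|.
\]
Applying this to $X = E$ and to $X = F \times B$, and splicing the two resulting chains along the $\Delta$-set isomorphism ${\sf m}^\ell_\bullet(E)/D^\ell_\bullet(E) \cong {\sf m}^\ell_\bullet(F \times B)/D^\ell_\bullet(F \times B)$ of Proposition \ref{deltaisom} (which induces an isomorphism after applying $j_!$ and then $|{-}|$), produces
\[
|{\sf M}^\ell_\bullet(E)| \simeq \bigl| j_!\bigl( {\sf m}^\ell_\bullet(E)/D^\ell_\bullet(E) \bigr) \bigr| \cong \bigl| j_!\bigl( {\sf m}^\ell_\bullet(F \times B)/D^\ell_\bullet(F \times B) \bigr) \bigr| \simeq |{\sf M}^\ell_\bullet(F \times B)|,
\]
which is the claim.

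The argument is essentially bookkeeping, so I do not anticipate a genuine obstacle; the two points that warrant a line of justification are (i) the compatibility of $j_!$ with the quotient by $D^\ell_\bullet$, which is the formal consequence of $j_! \dashv j^\ast$ noted above, and (ii) that every link in the zig-zag points the right way for the chain to compose — each step is either an honest isomorphism or one of the homotopy equivalences of Propositions \ref{wtmm} and \ref{heq2}, so there is no span to resolve and one just reads off the composite.
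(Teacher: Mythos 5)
Your proposal is correct and follows essentially the same route as the paper: the same zig-zag through Propositions \ref{jmM}, \ref{wtmm}, \ref{heq2} and \ref{deltaisom}, with the same appeal to $j_!$ being a left adjoint to identify $j_!$ of the quotient with the quotient of the $j_!$'s. Your explicit remark that $F\times B$ (with the $\ell^1$ metric) is itself a metric fibration with fiber $F$, so that the intermediate results apply to it, is a small point the paper leaves implicit but is a welcome clarification rather than a deviation.
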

\begin{proof}
    We have homotpy equivalences
    \begin{align*}
    &|{\sf M}^{\ell}_\bullet(E)| \simeq |j_! {\sf m}^{\ell}_\bullet(E)|  \simeq  | j_!{\sf m}^{\ell}_\bullet(E)/j_!D^\ell_\bullet(E)| \\
    &\cong  |j_!{\sf m}^{\ell}_\bullet(F\times B)/j_!D^\ell_\bullet(F\times B)|   \simeq  | j_!{\sf m}^{\ell}_\bullet(F\times B)|  \simeq  | {\sf M}^{\ell}_\bullet(F\times B)|,
    \end{align*}
by Propositions \ref{jmM}, \ref{wtmm}, \ref{heq2} and \ref{deltaisom}. Note that $j_!$ commutes with quotients since it is a left adjoint.
\end{proof}
From Tajima and Yoshinaga's K\"{u}nneth theorem for magnitude homotopy type (\cite{TY} Theorem 4.27) together with the coincidence of two definitions of magnitude homotopy types (Proposition \ref{TYprof}), we have the following.
\begin{cor}\label{main3}
    Let $\pi : E \too B$ be a metric fibration and let $F$ be its fiber. Then we have a homotopy equivalence $|{\sf M}^{\ell}_\bullet(E)| \simeq \bigvee_{\ell_\tV + \ell_\tH = \ell}|{\sf M}^{\ell_\tV}_\bullet(F)| \wedge |{\sf M}^{\ell_\tH}_\bullet(B)|$. 
\end{cor}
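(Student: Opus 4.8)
The plan is to bootstrap from the product case, which has already been treated in the literature. First I would invoke Corollary \ref{main2}: for the metric fibration $\pi : E \too B$ with fiber $F$, the total space has the same magnitude homotopy type as the ($\ell^1$-)product $F \times B$, that is, $|{\sf M}^{\ell}_\bullet(E)| \simeq |{\sf M}^{\ell}_\bullet(F\times B)|$. This immediately reduces the statement to establishing the Künneth-type decomposition $|{\sf M}^{\ell}_\bullet(F\times B)| \simeq \bigvee_{\ell_\tV + \ell_\tH = \ell}|{\sf M}^{\ell_\tV}_\bullet(F)| \wedge |{\sf M}^{\ell_\tH}_\bullet(B)|$ for an honest product of metric spaces.

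Second, that product decomposition is precisely Tajima and Yoshinaga's Künneth theorem for the magnitude homotopy type (\cite{TY} Theorem 4.27), except that it is phrased in terms of \emph{their} model of the magnitude homotopy type rather than in terms of the geometric realization $|{\sf M}^{\ell}_\bullet(-)|$ of Hepworth--Willerton's pointed simplicial set. So the third ingredient is Proposition \ref{TYprof}, which identifies the two models up to homotopy equivalence. Chaining the three: $|{\sf M}^{\ell}_\bullet(F\times B)|$ is homotopy equivalent (via Proposition \ref{TYprof}) to the magnitude homotopy type of $F\times B$ in the sense of \cite{TY}, which by \cite{TY} Theorem 4.27 is a wedge of smash products of the corresponding models for $F$ and $B$, which in turn are homotopy equivalent (again by Proposition \ref{TYprof}) to $|{\sf M}^{\ell_\tV}_\bullet(F)|$ and $|{\sf M}^{\ell_\tH}_\bullet(B)|$; the wedge and smash are preserved because the equivalence of Proposition \ref{TYprof} is pointed and compatible with these constructions.

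The genuinely new content is concentrated in Corollary \ref{main2}; everything else here is the assembly of two previously established results. Accordingly, the points that require care are bookkeeping ones rather than conceptual obstacles: one must check that the metric on $F\times B$ appearing in Corollary \ref{main2} (the $\ell^1$-sum $d_F + d_B$, under which a path of total length $\ell$ splits additively as $\ell = \ell_\tV + \ell_\tH$ into its fiber and base contributions) is exactly the product metric for which the Künneth theorem of \cite{TY} is formulated; and that all the spaces in play are CW complexes, so that the resulting zig-zag of homotopy equivalences composes to a single homotopy equivalence. The latter is automatic since each space is the geometric realization of a ($\Delta$- or simplicial) set. I do not anticipate any substantive difficulty beyond matching these conventions.
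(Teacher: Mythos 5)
Your argument coincides with the paper's: Corollary \ref{main2} reduces the statement to the product $F\times B$, and the decomposition of $|{\sf M}^{\ell}_\bullet(F\times B)|$ is then exactly Tajima--Yoshinaga's K\"{u}nneth theorem (\cite{TY} Theorem 4.27) transported through Proposition \ref{TYprof}, which is precisely how the paper deduces Corollary \ref{main3}. The bookkeeping points you flag (the $\ell^1$-product metric and the additive splitting $\ell=\ell_\tV+\ell_\tH$) are the right ones to check, and they do match the conventions of \cite{TY}.
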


\section{Appendix}\label{appendix}

In this appendix, we prove the following proposition which is stated in \cite{TY} without a proof.

\begin{prop}\label{TYprof}
   Let $X$ be a metric space and $\ell\in \R_{\geq 0}$. Tajima and Yoshinaga's magnitude homotopy type $\mathcal{M}^\ell(X)$ is homeomorphic to the geometric realization $|{\sf M}^\ell_\bullet(X)|$ of Hepworth and Willerton's simplicial set ${\sf M}^\ell_\bullet(X)$. 
\end{prop}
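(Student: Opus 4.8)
The plan is to put both sides into a common CW description indexed by the same combinatorial data, and then to promote the tautological bijection of cells to a homeomorphism. I would start from the standard description of the geometric realization of a simplicial set: $|{\sf M}^\ell_\bullet(X)|$ is a CW complex with exactly one $n$-cell for each non-degenerate $n$-simplex of ${\sf M}^\ell_\bullet(X)$. A simplex $(x_0,\dots,x_n)\neq\ast$ is non-degenerate precisely when $x_i\neq x_{i+1}$ for all $i$, i.e.\ when it lies in $P^\ell_n(X)$, and $\ast$ is the only other non-degenerate simplex (a $0$-simplex). Hence
\[
|{\sf M}^\ell_\bullet(X)| \;\cong\; \Bigl(\{\ast\}\ \sqcup\ \coprod_{n\geq 0}\ \coprod_{x\in P^\ell_n(X)}\Delta^n\Bigr)\Big/\!\sim,
\]
where $\sim$ attaches the face $\delta_i(\Delta^{n-1})$ of the simplex indexed by $x=(x_0,\dots,x_n)$ to the simplex indexed by $d_ix$, with the convention that $d_ix=\ast$ is glued to the single point $\{\ast\}$; so this face is crushed to $\ast$ whenever $i\in\{0,n\}$, or $1\leq i\leq n-1$ and $\neg(x_{i-1}\prec x_i\prec x_{i+1})$, and is identified with $(x_0,\dots,\widehat{x_i},\dots,x_n)$ otherwise.

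Next I would read off an analogous CW structure on Tajima and Yoshinaga's $\mathcal{M}^\ell(X)$ directly from its definition in \cite{TY}: one checks that its cells, too, are indexed by $\{\ast\}\sqcup\coprod_n P^\ell_n(X)$, the cell of $x=(x_0,\dots,x_n)$ being a standard simplex $\Delta^n$ whose faces are attached by exactly the same $\prec$-betweenness rule, the remaining faces being collapsed onto a single basepoint $0$-cell. Extracting this minimal cell structure from the more geometric definition of $\mathcal{M}^\ell(X)$ is itself the first point that requires care — in particular one must check that the various ``$\ast$-faces'' are identified with one and the same point and do not spawn extra cells. It helps to cut both sides along the wedge decomposition over ordered pairs of points: since neither the face maps of ${\sf M}^\ell_\bullet(X)$ nor the attaching maps of $\mathcal{M}^\ell(X)$ ever alter the two endpoints $(x_0,x_n)$, both spaces split as wedges indexed by $(a,b)\in X\times X$, and one may argue one wedge summand at a time.

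With both presentations in hand, I would define $\Phi\colon\mathcal{M}^\ell(X)\to|{\sf M}^\ell_\bullet(X)|$ to be induced by the identity map of $\coprod_{n}\coprod_{x\in P^\ell_n(X)}\Delta^n$ (sending the cell of a tuple to the cell of the same tuple) together with $\ast\mapsto\ast$, and define $\Psi$ in the opposite direction the same way. The content is the single compatibility assertion that, for every tuple $x=(x_0,\dots,x_n)$ and every $0\leq i\leq n$, the $i$-th face of the cell of $x$ is attached, in \emph{both} complexes, to the cell of $(x_0,\dots,\widehat{x_i},\dots,x_n)$ when $1\leq i\leq n-1$ and $x_{i-1}\prec x_i\prec x_{i+1}$, and to the basepoint in all other cases. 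Granting this, the identity of $\coprod\Delta^n$ descends to both quotients, so $\Phi$ and $\Psi$ are well defined, mutually inverse, and each continuous by the universal property of the quotient; being a continuous bijection with continuous inverse, $\Phi$ is a homeomorphism.

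I expect essentially all the difficulty to sit in the two bookkeeping steps above: identifying the minimal CW structure of $\mathcal{M}^\ell(X)$, and checking that its attaching maps match the face/degeneracy combinatorics of Hepworth and Willerton's simplicial set. The delicate point in both is the treatment of the basepoint: one must confirm that the boundary faces $i=0,n$ and the non-betweenness faces are all absorbed into a \emph{single} point. This is also where the hypothesis of Proposition \ref{TYprof} genuinely matters — realizing instead the $\Delta$-set ${\sf m}^\ell_\bullet(X)$ of Section \ref{htpytype} would introduce a whole $S^\infty$ of cells, so the comparison has to be with the simplicial set ${\sf M}^\ell_\bullet(X)$ and the degeneracies must be accounted for correctly, rather than merely up to homotopy.
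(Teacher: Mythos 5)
Your plan is correct and would yield the homeomorphism, but it is packaged differently from the paper's argument. You extract minimal CW structures on both sides and match cells and attaching maps by hand; the paper instead stays at the simplicial level: it regards the ordered simplicial complexes $\Delta{\rm Cau}^\ell(X;a,b)$ and $\Delta'{\rm Cau}^\ell(X;a,b)$ as $\Delta$-sets, applies the left adjoint $j_!$ (which preserves the relevant quotients and wedges), and identifies the quotient simplicial set $\bigvee_{a,b}j_!\overline{\Delta{\rm Cau}^\ell(X;a,b)}/j_!\overline{\Delta'{\rm Cau}^\ell(X;a,b)}$ with ${\sf M}^\ell_\bullet(X)$ outright, so the homeomorphism comes from applying geometric realization once to an identification of simplicial sets rather than from a cell-by-cell comparison of realizations. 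The combinatorial core is the same in both routes, namely Proposition 4.2 of \cite{TY}: a chain is non-collapsed precisely when every step is tight, which forces the time coordinates (and the endpoints $(a,0)$, $(b,\ell)$) to be determined by the points and identifies the non-collapsed $n$-chains in the $(a,b)$-summand with tuples $(x_0,\dots,x_n)\in P^\ell_n(X)$ with $x_0=a$, $x_n=b$; your step ``one checks that its cells are indexed by $\{\ast\}\sqcup\coprod_nP^\ell_n(X)$'' is exactly this proposition and should be cited or proved rather than left as bookkeeping. Two further small verifications that your route needs and the paper's formalism sidesteps: (i) for $x\in P^\ell_n(X)$ and $1\leq i\leq n-1$ with $x_{i-1}\prec x_i\prec x_{i+1}$, the face $d_ix$ is again non-degenerate (if $x_{i-1}=x_{i+1}$, betweenness would force $x_{i-1}=x_i$), so no degeneracy collapses other than those onto the basepoint enter the attaching maps and your description of $|{\sf M}^\ell_\bullet(X)|$ as a quotient of $\{\ast\}\sqcup\coprod_n\coprod_{x\in P^\ell_n(X)}\Delta^n$ is legitimate; (ii) on the Tajima--Yoshinaga side, deleting the first or last entry of a non-collapsed chain strictly decreases the length sum, so those faces land in $\Delta'{\rm Cau}^\ell(X;a,b)$ and are absorbed into the single basepoint, matching $d_0x=d_nx=\ast$. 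With these points written out, your maps $\Phi$ and $\Psi$ are well defined, mutually inverse and cellular, and the argument goes through; it is more hands-on than the paper's, but not more elementary, since both ultimately rest on the same cited proposition.
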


Recall from \cite{TY} that the CW complex $\mathcal{M}^\ell(X)$ is defined as the quotient $|\Delta{\rm Cau}^\ell(X)|/|\Delta'{\rm Cau}^\ell(X)|$ of the geometric realization of simplicial complexes $\Delta{\rm Cau}^\ell(X)$ and $\Delta'{\rm Cau}^\ell(X)$. Here, the simplicial complex $\Delta{\rm Cau}^\ell(X)$ is the order complex of the poset ${\rm Cau}^\ell(X) = \coprod_{a, b \in X} {\rm Cau}^\ell(X ; a, b)$ defined by 
\[
{\rm Cau}^\ell(X ; a, b) = \{(x, t) \in X \times [0, \ell] \mid d(a, x) \leq t, d(x, b) \leq \ell - t\},
\]
where $(x, t) \leq (x', t')$ if and only if $d(x, x') \leq t'-t$. Then the simplicial complex $\Delta{\rm Cau}^\ell(X) = \coprod_{a, b \in X}\Delta{\rm Cau}^\ell(X ; a, b)$ is defined by
\[
\Delta{\rm Cau}^\ell(X ; a, b) = \{\{(x_0, t_0), \dots, (x_n, t_n)\} \mid d(x_i, x_{i+1}) \leq t_{i+1}-t_i \text{ for} -1 \leq i \leq n \},
\]
where we put $x_{-1} = a, x_{n+1} = b, t_{-1} = 0, t_{n+1} = \ell$. Since we can extend each partial order to a total order, the simplicial complex $\Delta{\rm Cau}^\ell(X ; a, b)$ can be considered as an ordered simplicial complex, and each face of it can be expressed as a tuple $((x_0, t_0), \dots, (x_n, t_n))$ which is not just a set of points $\{(x_0, t_0), \dots, (x_n, t_n)\}$. The simplicial subcomplex $\Delta'{\rm Cau}^\ell(X) = \coprod_{a, b \in X} \Delta{\rm Cau}^\ell(X ; a, b)$ is defined by
\[
\Delta'{\rm Cau}^\ell(X ; a, b) = \{((x_0, t_0), \dots, (x_n, t_n)) \in \Delta{\rm Cau}^\ell(X ; a, b) \mid \sum_{i=0}^{n-1}d(x_i, x_{i+1}) <\ell \},
\]
which is also ordered. Here we note that we have $d(x_i, x_{i+1}) = t_{i+1}-t_i$ for all $-1\leq i \leq n$ if and only if $\sum_{i=0}^{n-1}d(x_i, x_{i+1}) = \ell$ by Proposition 4.2 of \cite{TY}.
\begin{proof}[Proof of Proposition \ref{TYprof}]
    Note first that each ordered simplicial complex $X$ can be turned into a $\Delta$-set $\overline{X}$ in a natural manner, and we obtain a simplicial set $j_!\overline{X}$. Obviously, the geometric realization of the ordered simplicial complex $X$ is homeomorphic to the geometric realization $|j_!\overline{X}|$ by the definitions. Also, for a pair $Y \subset X$ of ordered simplicial complexes, we have $|X|/|Y| \cong |j_!\overline{X}|/|j_!\overline{Y}| \cong |j_!\overline{X}/j_!\overline{Y}|$. Hence we have 
    \begin{align*}
    \mathcal{M}^\ell(X) &= |\Delta{\rm Cau}^\ell(X)|/|\Delta'{\rm Cau}^\ell(X)| \\
    &\cong \bigvee_{a, b}|\Delta{\rm Cau}^\ell(X; a, b)|/|\Delta'{\rm Cau}^\ell(X; a, b)| \\
    &\cong \bigvee_{a, b}|j_!\overline{\Delta{\rm Cau}^\ell(X; a, b)}/j_!\overline{\Delta'{\rm Cau}^\ell(X; a, b)}|\\
   &\cong |\bigvee_{a, b}j_!\overline{\Delta{\rm Cau}^\ell(X; a, b)}/j_!\overline{\Delta'{\rm Cau}^\ell(X; a, b)}|.
    \end{align*}
    Now, by Proposition 4.2 of \cite{TY},  we have $\bigvee_{a, b}j_!\overline{\Delta{\rm Cau}^\ell(X; a, b)}/j_!\overline{\Delta'{\rm Cau}^\ell(X; a, b)} = {\sf M}^\ell_\bullet(X)$. 
\end{proof}

\end{document}